\newtheorem{theorem}{Theorem}[section]
\newtheorem{corollary}{Corollary}[theorem]
\newtheorem{lemma}{Lemma}[section]
\newtheorem{prop}{Proposition}[section]
\newtheorem{remark}{Remark}[section]
\newtheorem{definition}{Definition}[section]
\title{Randomized strong rank-revealing QR for column subset selection and low-rank matrix approximation}
\author{Laura Grigori\thanks{PSI, Center for Scientific Computing, Theory and Data, Villigen, and EPFL, Institute of Mathematics, Lausanne, Switzerland}, Zhipeng Xue\thanks{EPFL, Institute of Mathematics, Lausanne, Switzerland}}
\date{}
\newcommand\tabcaption{\def\@captype{table}\caption}
\newcommand\figcaption{\def\@captype{figure}\caption}
\newcommand{\lab}{\left|}
\newcommand{\rab}{\right|}
\newcommand{\be}{\begin{eqnarray}}
	\newcommand{\ee}{\end{eqnarray}}
\newcommand{\beno}{\begin{eqnarray*}}
	\newcommand{\eeno}{\end{eqnarray*}}
\newcommand{\barr}[1]{\begin{array}{#1}}
	\newcommand{\earr}{\end{array}}
\newcommand{\R}{\mathbb{R}}
\newcommand{\Rmn}{\mathbb{R}^{m\times n}}
\newcommand{\Rnn}{\mathbb{R}^{n\times n}}
\newcommand{\Rdm}{\mathbb{R}^{d\times m}}
\newcommand{\Rdn}{\mathbb{R}^{d\times n}}
\newcommand{\md}{\delta}
\newcommand\parallelogram{%
    \mathord{\text{%
        \tikz[baseline] \draw (0,.1ex) -- (.8em,.1ex) -- (1em,1.6ex) -- (.2em,1.6ex) -- cycle;}}}
\newcommand{\me}{\epsilon}
\newcommand{\ms}{\sigma}
\newcommand{\mr}{\gamma}
\newcommand{\mw}{\omega}
\algrenewcommand\algorithmicrequire{\textbf{Input:}}
\algrenewcommand\algorithmicensure{\textbf{Output:}}
\newcommand{\cK}{{\mathcal K}}
\newcommand{\cU}{{\mathcal U}}
\newcommand{\cV}{{\mathcal V}}
\newcommand{\cP}{{\mathcal P}}
\newcommand{\cR}{{\mathcal R}}
\newcommand{\bA}{{\textbf{A}}}
\newcommand{\bB}{{\textbf{B}}}
\newcommand{\bD}{{\textbf{D}}}
\newcommand{\bL}{{\textbf{L}}}
\newcommand{\bP}{{\textbf{P}}}
\newcommand{\bM}{{\textbf{M}}}
\newcommand{\bH}{{\textbf{H}}}
\newcommand{\bK}{{\textbf{K}}}
\newcommand{\bO}{{\textbf{0}}}
\newcommand{\bQ}{{\textbf{Q}}}
\newcommand{\bI}{{\textbf{I}}}
\newcommand{\bR}{{\textbf{R}}}
\newcommand{\bb}{{\textbf{b}}}
\newcommand{\bu}{{\textbf{u}}}
\newcommand{\bU}{{\textbf{U}}}
\newcommand{\bV}{{\textbf{V}}}
\newcommand{\bY}{{\textbf{Y}}}
\newcommand{\bq}{{\textbf{q}}}
\newcommand{\bx}{{\textbf{x}}} 
\newcommand{\bv}{{\textbf{v}}}
\newcommand{\bW}{{\textbf{W}}}
\newcommand{\mmr}{{\pmb{\gamma}}}
\newcommand{\mmP}{{\pmb{\Pi}}}
\newcommand{\mmO}{{\pmb{\Omega}}}
\newcommand{\by}{{\textbf{y}}}
\newcommand{\ijk}{1\leq i \leq k,\ 1\leq j\leq n-k}
\newcommand{\ang}{\measuredangle}
\newcommand{\ziji}{\subset}
\DeclareMathOperator*{\argmax}{arg\,max}
\DeclareMathOperator*{\argmin}{arg\,min}
\begin{document}
\maketitle	
\begin{abstract}
We discuss a randomized strong rank-revealing QR factorization that effectively reveals the spectrum of a matrix $\bM$. 
This factorization can be used to address problems such as selecting a subset of the columns of $\bM$, computing its low-rank approximation, estimating its rank, or approximating its null space. Given a random sketching matrix $\mmO$ that satisfies the $\me$-embedding property for a subspace within the range of $\bM$, the factorization relies on selecting columns that allow us to reveal the spectrum via a deterministic strong rank-revealing QR factorization of $\bM^{sk} = \mmO\bM$, the sketch of $\bM$. We show that this selection leads to a factorization with strong rank-revealing properties, making it suitable for approximating the singular values of $\bM$.
\end{abstract}

\section{Introduction}
Revealing the spectrum of a matrix using the QR factorization with column interchanges is essential in many computations, including column subset selection, low-rank matrix approximation, rank estimation, and null space approximation.
Given a matrix $\bM \in \mathbb{R}^{m \times n}$, this factorization is defined as
\be \label{def_partqr} \bM \mmP = \bQ \bR =
\bQ \begin{pmatrix} \bR_{11} & \bR_{12}\\ \bO & \bR_{22}
\end{pmatrix},
\ee where $\bQ \in \R^{m\times m}$ is an orthogonal matrix, $\bR_{11} \in \R^{k\times k}$ is an upper-triangular matrix, and $\mmP \in \R^{n\times n }$ is a permutation matrix chosen to reveal the spectrum of $\bM$ \cite{rrqr_survey_ipsen, srrqr}.  Such a factorization is called strong rank-revealing if it satisfies the following properties:
\begin{equation}
\begin{split}
  \label{def_srrqr}
  1  \leq  \frac{\ms_i(\bM)}{\ms_i(\bR_{11})} \leq q(k,n) \;  \ \text{and }&\ \;1  \leq \frac{\ms_j(\bR_{22})}{\ms_{j+k}(\bM)} \leq q(k,n), \\
  |(\bR_{11}^{-1} \bR_{12})_{i,j}|& \leq f,
\end{split}
\end{equation}
for any $1\leq i \leq k$ and $1 \leq j \leq n-k$, where $f$ is a small
constant, $q(k,n)$ is a function bounded by a low-degree polynomial in $k$ and $n$, and assuming that $\ms_{j+k}(\bM) \neq 0$.  The first inequalities ensure that the singular values of $\bR_{11}$ approximate well the $k$ leading singular values of $\bM$ and those of $\bR_{22}$ approximate well the remaining $\min(m,n) - k$ singular values of $\bM$.  The lower bound in both inequalities is satisfied for any permutation by the interlacing property of singular values\cite{golub2013matrix}.   The last inequality allows us to bound the elements of $\bR_{11}^{-1} \bR_{12}$. Thus, when $\ms_{max}(\bR_{22})$ is small, $\mmP \begin{pmatrix} -\bR_{11}^{-1}\bR_{12}\\ \bI_{n-k} \end{pmatrix}$ can be used to approximate the null space of $\bM$. 

One of the most used algorithms for solving this problem is the QR
factorization with column pivoting (QRCP) \cite{GOLUB1965}.  At each step of the factorization, the column of maximum norm is permuted to the leading position, and then a Householder reflector is applied to update the factorization. 
Although it performs well in practice, except in rare cases such as the Kahan matrix \cite{Kahan_book}, the bounds on the singular values for QRCP grow exponentially with the rank $k$, 
 $
     1  \leq  \frac{\ms_i(\bM)}{\ms_i(\bR_{11})} \leq 2^i \sqrt{n-i}\; , \;1  \leq \frac{\ms_j(\bR_{22})}{\ms_{j+k}(\bM)} \leq 2^k \sqrt{n-k}.$ 
The strong rank-revealing QR factorization introduced in \cite{srrqr} satisfies \eqref{def_srrqr} with 
\begin{equation}
q(k,n) = \sqrt{1 + f^2 k (n-k)}.    
\end{equation}
For a given $k$, it is obtained by first performing QR with column pivoting, followed by additional permutations until the strong rank-revealing property in \eqref{def_srrqr} holds. It can also be obtained by progressively increasing the value of $k$ and identifying for each value of $k$ the columns that need to be interchanged such that \eqref{def_srrqr} holds. We refer to this factorization as strong RRQR or SRRQR.

Since these spectrum-revealing factorizations involve permuting columns at each step of the factorization, they do not fully exploit BLAS3 optimized kernels and they involve significant data movement. As a result, their performance is significantly lower than that of the QR factorization without pivoting. In the recent years this problem has been addressed by using randomization, a technique that allows embedding large dimensional subspaces into low dimensional ones while preserving some geometry, as inner products of vectors or their norms. It was successfully used for solving least squares problems, computing low-rank matrix approximations, or solving linear systems of equations and eigenvalue problems, for more details see e.g. \cite{Martinsson_Tropp_2020, murray2023randomizednumericallinearalgebra}. For a set of vectors belonging to a subspace $\cV \subset \R^m$, random sketching relies on a linear map $\mmO \in \mathbb{R}^{d \times m}$, with $d \ll m$, that satisfies the following property, known as the $\me$-embedding property,  
\be
    |\langle \mmO \bx, \mmO \by \rangle - \langle \bx, \by \rangle| \leq \me \|\bx\|_2 \|\by\|_2, \quad \forall \bx, \by \in \cV \subset \R^m,
\ee  
where the constant $\me$ represents the distortion parameter of the map $\mmO$.  Randomized QR with column pivoting factorizations were introduced in \cite{ming_rqrcp, hqrrp}, where the column pivots are selected from the sketch of $\bM$, $\bM^{sk} = \mmO \bM$, through its QR factorization with column pivoting (QRCP).  Thus $k$ pivot columns can be chosen at once in each iteration, and since the sketch dimension depends on $k$, it can be much smaller than the dimensions of the matrix. As a consequence, the computational and communication costs associated with pivoting is significantly reduced and the QR factorization of the matrix $\bM$ of large dimensions is done without pivoting. In addition, the factorization of the matrix $\bM$ can use level-3 optimized BLAS routines. However, since the factorization of $\bM^{sk}$ relies on QRCP, it inherits the exponential growth of the bounds on singular values with increasing $k$. Indeed, it is shown in \cite{rqrcp} that, for a given $\mmO$ and $k$,  
 \be
 \label{rqrcp_rrqr_bound}
     1  \leq \frac{\ms_{max}(\bR_{22})}{\ms_{k+1}(\bM)} \leq C_{\me}(1+\sqrt{\frac{1+\me}{1-\me}})^k \sqrt{(k+1)(n-k)}.
\ee
Stronger guarantees can be obtained by applying strong rank-revealing QR to the matrix $\bM$ after this initial step, as described in \cite{rqrcp}. While this reduces the number of permutations compared to applying strong rank-revealing QR directly to $\bM$, it still incurs additional computational and communication costs.

In this paper we study a spectrum-revealing QR factorization with column interchanges which uses as pivots the columns selected by the strong rank-revealing QR factorization of $\bM^{sk}$. We refer to this factorization as the randomized strong rank-revealing QR factorization.  
To demonstrate that it satisfies the strong rank-revealing property from \eqref{def_srrqr}, we use the fact, as shown in \cite{srrqr}, that this property holds if
\be
\max\limits_{1\leq i\leq k,1\leq j\leq n-k} \lab \operatorname{det}(\bar{\bR}_{11})/\operatorname{det}(\bR_{11}) \rab \leq \tilde{f},
\ee
where $\bar{\bR}_{11}$ is the leading submatrix of the $\bR$ factor obtained after interchanging the $i$-th and the $(j+k)$-th columns of $\bM$, and in our case, $\tilde{f} = \sqrt{\frac{1+\me}{1-\me}}f$. This quantity has a geometric interpretation. It is the ratio of the lengths of two vectors, which are the residuals of two least squares problems of the form $\min\limits_{\bx\in\R^{k - 1}}\|\bA\bx - \bb_1\|_2$ and $\min\limits_{\bx\in\R^{k - 1}}\|\bA\bx - \bb_2\|_2$, where $\bA \in \R^{m \times {(k-1)}}$ and $\bb_1$, $\bb_2 \in \R^m$. This observation allows us to prove the following result. Let $\bM_k,\ \bM_k^{(i,j)}\in \R^{m \times k}$ be two submatrices of $\bM$ that are identical, except for a single column. Assume that $\mmO$ is an $\varepsilon$-subspace embedding of the ranges of $\bM_k$ and $\bM_k^{(i,j)}$. Consider the $\bR$ factors obtained from their thin QR factorizations, $\bR_{11}\in \R^{k \times k}$ and $\bar{\bR}_{11}^{(i,j)}\in\R^{k \times k}$, respectively. Then the ratio of their determinants is preserved through sketching, that is
\be \label{ieq_ratio_intro}
\sqrt{\frac{1-\me}{1+\me}} \left|\frac{\operatorname{det}(\bar{\bR}_{11}^{sk\  (i,j)})}{\operatorname{det}(\bR_{11}^{sk})}\right| \leq \lab \frac{\operatorname{det}(\bar{\bR}_{11}^{(i,j)})}{\operatorname{det}(\bR_{11})} \rab \leq  \sqrt{\frac{1+\me}{1-\me}}\lab\frac{\operatorname{det}(\bar{\bR}_{11}^{sk\ (i,j)})}{\operatorname{det}(\bR_{11}^{sk})}\rab,
\ee
where ${\bR}_{11}^{sk}$ and $\bar{\bR}_{11}^{sk\ (i,j)}$ are the $\bR$ factors of the sketches $\mmO \bM_k$ and $\mmO \bM_k^{(i,j)}$. This allows us to show that, for a given $k$, the randomized strong RRQR factorization effectively reveals the spectrum of $\bM$ by satisfying the strong rank-revealing property from \eqref{def_srrqr}, that is 

\begin{equation}
\label{bounds_randsrrqr}
\begin{split}
  1  \leq  \frac{\ms_i(\bM)}{\ms_i(\bR_{11})} \leq \sqrt{1 + \frac{1+\me}{1-\me} f^2 k (n - k) } \;  \ \text{and }&\ \;1  \leq \frac{\ms_j(\bR_{22})}{\ms_{j+k}(\bM)} \leq \sqrt{1 + \frac{1+\me}{1-\me} f^2 k (n - k) }, \\
  |(\bR_{11}^{-1} \bR_{12})_{i,j}| &\leq  \sqrt{\frac{1+\me}{1-\me}}f,
\end{split}
\end{equation}
for any $1\leq i \leq k$, $1\leq j \leq n-k$, where $f$ is a small constant, and assuming that $\ms_{j+k}(\bM) \neq 0$ to improve readability (see Remark \ref{remark} for the case when $\ms_{j+k}(\bM) = 0$). This property holds when the range of $\bM$ can be effectively embedded, as for tall and skinny matrices when $m \gg n$ or for low-rank matrices. It also holds for general matrices when $k \ll n$ (see Theorems \ref{th_rsrrqr1} and \ref{th_rsrrqr_square}). In the former case and when $k$ is not known, we present a randomized algorithm that computes a strong RRQR such that the maximum norm of each column of $R_{22}$ is smaller than $\frac{\tau}{\sqrt{1 - \me}}$, where $\tau$ is a given tolerance, and $\|R_{22}\|_2 \leq \sqrt{\frac{n - k}{1 - \me} } \tau$. 

The randomized strong RRQR factorization studied in this paper can be applied to solve different problems. It can be used to select $k$ columns to compute the low-rank approximation of a matrix by using QR with column interchanges \cite{ming_rqrcp,hqrrp,rqrcp},  interpolative decompositions, CUR, or QLP factorizations (see e.g. \cite{Voronin2017_book_curid} for interpolative decompositions and CUR, see \cite{qlp} for QLP).  The $\bR_{11}$-factor of the sketch of $\bM$ computed by strong RRQR can be used as a preconditioner for solving least-squares problems as in \cite{rokhlin_fast_2008} or for orthogonalizing a set of vectors as in preconditioned CholeskyQR (see e.g. \cite{blocked_RGS,balabanov2022randomizedcholeskyqrfactorizations, garrison2024randomizedpreconditionedcholeskyqralgorithm, cqrrpt}). In general, while the randomized strong RRQR discussed in this paper is closely related to previous approaches, our main contribution consists in showing that such a factorization satisfies the strong rank-revealing properties as given in \eqref{bounds_randsrrqr}. These results can be used to improve the theoretical guarantees of other algorithms.

The paper is organized as follows. Section \ref{section_pre} reviews the deterministic strong rank-revealing QR factorization and random sketching. Section \ref{section_sketch_geo} discusses several relevant properties preserved through sketching, as angles between two vectors, angles between a vector and a subspace, and ratio of volumes of tall and skinny matrices that are identical except for one column.  Section \ref{section_rrqr} introduces randomized strong RRQR algorithms by considering two cases, when the dimension of the range of $\bM$ is much smaller than $m$, or when $k \ll m$. In the former case, we also provide two algorithms, when either a rank $k$ or a tolerance $\tau$ are given. We prove strong rank-revealing properties of the obtained factorizations. Section \ref{section_tests} provides numerical experiments showing the effectiveness of the proposed approaches.
 
\section{Preliminaries}
\label{section_pre}
In this section, we first discuss the strong RRQR factorization and its properties, and then we introduce random sketching, the $\epsilon$-embedding property, the oblivious subspace embedding property, and the usage of randomization for solving least squares problems.
\subsection{Notations}
Consider the partial QR factorization of a matrix $\bM$,
\be \label{QR}
\bM = \bQ \bR = \bQ \begin{pmatrix}
	\bR_{11}       & \bR_{12}\\
	\bO    & \bR_{22}
\end{pmatrix},
\ee
where $\bR_{11} \in \R^{k \times k}$ is an upper-triangular matrix, $\bR_{12} \in \R^{k\times (n-k)}$ and $\bR_{22} \in \R^{(m-k)\times(n-k)}$ are general matrices. We denote the partial $\bR$ factor of this factorization as $\cR_k(\bM) = \begin{pmatrix}
		\bR_{11}       & \bR_{12}\\
		\bO    & \bR_{22}
\end{pmatrix}$. For an invertible matrix $\bA \in \R^{m \times m}$ and a general matrix $\bB \in \R^{m \times n}$, we refer to the norm of the $i$th row of $\bA^{-1}$ as $\omega_i(\bA)$ and to the norm of the $j$th column of $\bB$ as $\mr_j(\bB)$. We also use the notation $\mw_*(\bA) = \bigg( \omega_1(\bA),\; \omega_2(\bA),\; \cdots \;\omega_m(\bA)\bigg)$ and $\mr_*(\bB) = \bigg( \mr_1(\bB),\; \mr_2(\bB),\; \cdots \;\mr_n(\bB) \bigg)$.  

In the context of the strong RRQR factorization we use the notation 
 \be
 \label{def_rho}
  \rho(\bR,k) :=  \max\limits_{1\leq i \leq k,\ 1\leq j \leq n-k}\sqrt{(\bR_{11}^{-1} \bR_{12})_{i,j}^2 + \mw_i^2(\bR_{11})\mr_j^2(\bR_{22})},
 \ee
 and its upper bound $\sqrt{2}\hat{\rho}(\bR,k)$, where
\[ \hat{\rho}(\bR,k) :=
 \max\bigg(\max\limits_{1\leq i \leq k,\ 1\leq j \leq n-k}|(\bR_{11}^{-1} \bR_{12})_{i,j}|,\max\limits_{1\leq i \leq k,\ 1\leq j \leq n-k}\mw_i(\bR_{11})\mr_j(\bR_{22}) \bigg).
\] 
We refer to the permutation matrix that interchanges the $i$-th and $j$-th columns of a matrix as $\mmP_{i,j}$. The factorizations described in this paper often require interchanging one of the first $k$ columns of $\bM$ with one of the last $n - k$ columns, where $\ijk$. They also require recomputing a QR factorization of the permuted matrix $\bM \mmP_{i,j+k}$. In this context,   $\bar{\bR}^{(i,j)}$ denotes the new partial $\bR$ factor and $\bar{\bR}_{11}^{(i,j)}\in \R^{k\times k}$, $\bar{\bR}_{12}^{(i,j)}\in \R^{k\times (n-k)}$, $\bar{\bR}_{22}^{(i,j)}\in \R^{(m-k) \times (n-k)}$ denote its submatrices,  $i.e\ \bar{\bR}^{(i,j)} = \cR_k(\bM \mmP_{i,j+k}) = \begin{pmatrix}
    \bar{\bR}_{11}^{(i,j)} & \bar{\bR}_{12}^{(i,j)} \\
    \bO  & \bar{\bR}_{22}^{(i,j)}
\end{pmatrix}$.
	
\subsection{Strong rank-revealing QR factorization}

One of the most used algorithms for selecting linearly independent columns from $M$ and revealing its rank is
QR with column pivoting (QRCP) \cite{Golub1965_2}.  At each iteration, it selects the column of maximum norm from the trailing matrix $\bR_{22}$ to increase the value of $\ms_{min}(\bR_{11})$ and decrease that of $\ms_{max}(\bR_{22})$.  Randomized versions of this algorithm are presented in \cite{rqrcp,hqrrp}.  They are based on a block algorithm that selects at each iteration column pivots using the QRCP factorization of $\bM^{sk} = \mmO \bM$, where $\mmO$ is a random sketching matrix.

The strong rank-revealing QR factorization \cite{srrqr} aims to approximate the leading $k$ singular values of $\bM$ by those of $\bR_{11}$ and the remaining $\min(m,n)-k$ by those of $\bR_{22}$. Consider that $k$ is given. Since 
	$$
	\lab \operatorname{det}(\bR_{11}) \rab = \prod\limits_{i=1}^k \ms_i(\bR_{11}),
	\lab \operatorname{det}(\bR_{22}) \rab = \prod\limits_{j=1}^{n-k} \ms_j(\bR_{22}),
	\lab \operatorname{det}(\bR)\rab = \lab \operatorname{det}(\bR_{11}) \operatorname{det}(\bR_{22})\rab.
	$$
the main idea of this algorithm is to maximize the absolute value of $\operatorname{det}(\bR_{11})$ by repeatedly selecting and interchanging two columns, so that the singular values of $\bR_{11}$ increase while the singular values of $\bR_{22}$ decrease.
At each step of the algorithm, the $i$-th and the $j+k$-th columns are selected and interchanged such that $|\operatorname{det}(\bar{\bR}_{11}^{(i,j)})/\operatorname{det}(\bR_{11})|>f$, where $f>1$ is a given parameter and $\operatorname{det}(\bar{\bR}_{11}^{(i,j)})$ is the leading submatrix obtained after interchanging the $i$-th and the $(j + k)$-th columns.  Thus $\operatorname{det}(\bR_{11})$ increases by at least a factor of $f$ after each column interchange. After $p$ column interchanges, the determinant increases as $f^p$, hence it can quickly get close to (or attain) a local maximal value.

The algorithm stops when there does not exist such a pair $(i,j)$. To improve the efficiency of the algorithm, it is shown in Lemma 3.1 of \cite{srrqr} that 
\begin{equation}
\lab \operatorname{det}(\bar{\bR}_{11}^{(i,j)})/\operatorname{det}(\bR_{11}) \rab = \sqrt{(\bR_{11}^{-1} \bR_{12})_{i,j}^2 + \mw_i^2(\bR_{11}) \mr_j^2(\bR_{22})}.
\end{equation}
To ensure that $\lab \operatorname{det}(\bar{\bR}_{11}^{(i,j)})/\operatorname{det}(\bR_{11}) \rab<f$, it is sufficient that both $\max\limits_{i,j} \lab (\bR_{11}^{-1} \bR_{12})_{i,j} \rab $ and  $\max\limits_{i,j}\mw_i(\bR_{11})\mr_j(\bR_{22})$ are smaller than $\frac{\sqrt{2}}{2}f$. Thus the pair $(i,j)$ can be determined by either $\argmax\limits_{i,j}\lab (\bR_{11}^{-1} \bR_{12})_{i,j} \rab $ or $\argmax\limits_{i,j}\mw_i(\bR_{11})\mr_j(\bR_{22})$.

Algorithm \ref{alg:SRRQR_efficient} presents this strong RRQR factorization, slightly modified from \cite{srrqr}.  The rank $k$ is determined by progressively increasing the size of $\bR_{11}$ while satisfying the rank-revealing properties. The formulas for updating and modifying some required quantities are provided in \cite{srrqr}. 

\begin{algorithm}[htbp]
\caption{Compute a $k$ and a strong RRQR factorization(SRRQR)}\label{alg:SRRQR_efficient}
\begin{algorithmic}[1]
  \Require $\bM$ is $m \times n$ matrix, parameter $f>1$.
  \Statex \quad \quad \textbf{Options:} $r$ is target rank to be revealed, $\tau$ is parameter such that $\max\limits_{1\leq j \leq n-k} \gamma_j(\bR_{22}) \leq \tau$. 
  \Statex \quad \quad (Either $r$ or $\tau$ should be set.)
  \Ensure $k$ is size of $\bR_{11}$, $\bQ$ is $m \times m$ orthogonal matrix, $\bR$ is $m \times n$ upper trapezoidal matrix, $\mmP$ is $n \times n$ permutation matrix such that $\bM\mmP = \bQ \bR$.
  \Function{[$k$,$\bQ, \bR, \mmP$] = SRRQR}{$\bM,f,r,\tau$}
   \State $k \coloneqq 0$, $\bR \coloneqq \bM$, $\mmP \coloneqq \bI_n$
   \State Initialize $\omega_*(\bR_{11}), \gamma_*(\bR_{22})$ and $\bR_{11}^{-1}\bR_{12}$ 
   \While{ $\max\limits_{1\leq j \leq n-k} \gamma_j(\bR_{22}) \geq \tau$ (if $\tau$ is given) or $ k \leq r$ (if $r$ is given)}
        \State $j_{max} \coloneqq \arg \max_{1\leq j \leq n-k} \gamma_j(\bR_{22}) $
        \State $k \coloneqq k + 1$
        \State Compute $\bar{\bR}^{(k,j_{max}-1)} = \begin{pmatrix}
        \bar{\bR}_{11}^{(k,j_{max}-1)}       & \bar{\bR}_{12}^{(k,j_{max}-1)}\\ 
					\bO    & \bar{\bR}_{22}^{(k,j_{max}-1)} 
				\end{pmatrix} \coloneqq \cR_k(\bR\mmP_{k,k+j_{max-1}})$ and $\mmP \coloneqq \mmP \mmP_{k,k+j_{max-1}}$
        \State Update $\omega_*(\bR_{11}), \gamma_*(\bR_{22})$ and $\bR_{11}^{-1}\bR_{12}$
        \While{$\hat{\rho}(\bR,k) > f/\sqrt{2}$}
			\State Find $i$ and $j$ such that $|(\bR_{11}^{-1}\bR_{12})_{i,j}|>f$ or $\omega_i(\bR_{11})\gamma_j(\bR_{22}) > f$
            \State Compute $\bar{\bR}^{(i,j)} = \begin{pmatrix}
					\bar{\bR}_{11}^{(i,j)}       & \bar{\bR}_{12}^{(i,j)}\\ 
					\bO    & \bar{\bR}_{22}^{(i,j)} 
				\end{pmatrix} \coloneqq \cR_k(\bR \mmP_{i,j+k})$ and $\mmP \coloneqq \mmP\  \mmP_{i,j+k}$
            \State Modify $\omega_*(\bR_{11}), \gamma_*(\bR_{22})$ and $\bR_{11}^{-1}\bR_{12}$ 
			\EndWhile
   \EndWhile
   \State \Return $k,\bQ, \bR, \mmP$
  \EndFunction
		\end{algorithmic}
	\end{algorithm}
 \noindent
    
It is shown in \cite{srrqr} that the factorization obtained from Algorithm \ref{alg:SRRQR_efficient} is indeed a strong RRQR factorization. We introduce this result, which is used later in section \ref{section_rrqr} to show that the rank-revealing property is preserved in the randomized version of this factorization.
	
\begin{theorem} [Existence of a strong RRQR factorization 
\cite{srrqr}] \label{th_srrqr1}
		
Assume that we have a partial QR factorization $\bM \mmP = \bQ \bR$ and $\bR = \begin{pmatrix}
			\bR_{11}       & \bR_{12}\\
			\bO    & \bR_{22}
		\end{pmatrix}$. If $\rho(\bR,k) \leq f$, then 
		\be 
  1  \leq  \frac{\ms_i(\bM)}{\ms_i(\bR_{11})}\; , \; \frac{\ms_j(\bR_{22})}{\ms_{j+k}(\bM)} \leq \sqrt{1 + f^2 k (n-k)},\;
 |(\bR_{11}^{-1} \bR_{12})_{i,j}| \leq f,
		\ee
		$\forall 1\leq i \leq k,\ 1\leq j\leq n-k$.
	\end{theorem}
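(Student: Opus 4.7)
The theorem decomposes into three independent claims, which I would prove in order.

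The entrywise bound $|(\bR_{11}^{-1}\bR_{12})_{i,j}| \leq f$ is immediate from the definition of $\rho(\bR,k)$ in \eqref{def_rho}, since $(\bR_{11}^{-1}\bR_{12})_{i,j}^2$ is one of two nonnegative summands inside the square root and the maximum over all pairs is at most $f$. For the lower bounds $\ms_i(\bR_{11}) \leq \ms_i(\bM)$ and $\ms_j(\bR_{22}) \geq \ms_{j+k}(\bM)$, I would use that $\bM$ and $\bR$ share singular values and then invoke submatrix interlacing. For $\bR_{22}$, which is not a leading submatrix, I would sharpen this by a Courant--Fischer argument: on the $(n-k-j+1)$-dimensional subspace $V=\{(-\bR_{11}^{-1}\bR_{12}x_2,\,x_2):x_2\in W\}$, where $W$ witnesses $\ms_j(\bR_{22})$, one has $\bR(x_1,x_2) = (\bO,\bR_{22}x_2)$ and $\|x\|\geq\|x_2\|$, which forces $\ms_{j+k}(\bR) \leq \ms_j(\bR_{22})$.

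The upper bound $\ms_i(\bM) \leq \ms_i(\bR_{11})\sqrt{1+f^2k(n-k)}$ is the substance of the result, and I would assemble it from two ingredients. First, factoring the top block row as $(\bR_{11},\bR_{12}) = \bR_{11}(\bI_k,\bR_{11}^{-1}\bR_{12})$ and invoking the multiplicative singular value inequality $\ms_i(\bA\bB)\leq\ms_i(\bA)\|\bB\|_2$ gives $\ms_i((\bR_{11},\bR_{12}))^2 \leq \ms_i(\bR_{11})^2(1+\|\bR_{11}^{-1}\bR_{12}\|_2^2)$. Second, the identity $\bR^T\bR = (\bR_{11},\bR_{12})^T(\bR_{11},\bR_{12}) + \mathrm{diag}(\bO,\bR_{22}^T\bR_{22})$ together with Weyl's eigenvalue inequality yields $\ms_i(\bR)^2 \leq \ms_i((\bR_{11},\bR_{12}))^2 + \|\bR_{22}\|_2^2$. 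Combining these produces the estimate
\be
\ms_i(\bM)^2 \leq \ms_i(\bR_{11})^2 + \ms_i(\bR_{11})^2\|\bR_{11}^{-1}\bR_{12}\|_2^2 + \|\bR_{22}\|_2^2,
\ee
and the task reduces to absorbing the last two summands into $f^2 k(n-k)\,\ms_i(\bR_{11})^2$.

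This absorption step is the main obstacle: a naive bound via separate Frobenius inequalities on $\bR_{11}^{-1}\bR_{12}$ and $\bR_{22}$ costs a spurious factor of $\sqrt{2}$. The remedy is to exploit the coupled hypothesis in its global form. Summing $(\bR_{11}^{-1}\bR_{12})_{i,j}^2 + \omega_i^2(\bR_{11})\gamma_j^2(\bR_{22}) \leq f^2$ over all $k(n-k)$ pairs yields the single inequality $\|\bR_{11}^{-1}\bR_{12}\|_F^2 + \|\bR_{11}^{-1}\|_F^2\|\bR_{22}\|_F^2 \leq f^2 k(n-k)$. Because $\|\bR_{11}^{-1}\|_F^2 \geq 1/\ms_k(\bR_{11})^2 \geq 1/\ms_i(\bR_{11})^2$ for every $i \leq k$, the quantity $\ms_i(\bR_{11})^2\|\bR_{11}^{-1}\|_F^2$ is at least one, so $\|\bR_{22}\|_F^2 \leq \ms_i(\bR_{11})^2\|\bR_{11}^{-1}\|_F^2\|\bR_{22}\|_F^2$; using $\|\cdot\|_2\leq\|\cdot\|_F$ then collapses the displayed chain to $\ms_i(\bR_{11})^2(1+f^2k(n-k))$, as desired. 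The matching upper bound on $\ms_j(\bR_{22})/\ms_{j+k}(\bM)$ I would derive by a dual argument, applying the same block decomposition after exchanging the roles of the leading and trailing blocks (e.g., by considering the factorization of a suitably permuted and transposed problem in which $\bR_{22}$ sits in the top-left corner).
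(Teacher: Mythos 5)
The paper does not prove Theorem~\ref{th_srrqr1} itself; it is imported from Gu and Eisenstat~\cite{srrqr}, so there is no in-paper proof to compare against. Your argument for the entrywise bound, for both interlacing lower bounds (the Courant--Fischer argument with $V=\{(-\bR_{11}^{-1}\bR_{12}\bx_2,\bx_2):\bx_2\in W\}$ is exactly right), and for the $\bR_{11}$-side upper bound is correct. In particular the absorption step is clean: summing the hypothesis over all $k(n-k)$ pairs gives $\|\bR_{11}^{-1}\bR_{12}\|_F^2 + \|\bR_{11}^{-1}\|_F^2\|\bR_{22}\|_F^2 \le f^2k(n-k)$, and the observation $\ms_i(\bR_{11})^2\|\bR_{11}^{-1}\|_F^2 \ge \ms_k(\bR_{11})^2/\ms_k(\bR_{11})^2 = 1$ lets you fold $\|\bR_{22}\|_F^2$ into the $\ms_i(\bR_{11})^2$ scale without the spurious $\sqrt 2$; together with Weyl applied to $\bR^T\bR = (\bR_{11},\bR_{12})^T(\bR_{11},\bR_{12}) + \mathrm{diag}(\bO,\bR_{22}^T\bR_{22})$ this closes the first bound.

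The gap is in the dual step. A block permutation and transposition of $\bR$ alone does not produce a problem to which the first bound applies. If you set $\bR' = \begin{pmatrix}\bR_{22}^T&\bR_{12}^T\\ \bO&\bR_{11}^T\end{pmatrix}$, then $(\bR'_{11})^{-1}\bR'_{12} = \bR_{22}^{-T}\bR_{12}^T$, $\omega_{j}(\bR'_{11})$ is a column norm of $\bR_{22}^{-1}$, and $\gamma_{i}(\bR'_{22})$ is a row norm of $\bR_{11}$; none of these quantities are controlled by $\rho(\bR,k)\le f$, so the hypothesis does not transfer. Moreover, applying the first bound to $\bR'$ would yield $\ms_j(\bM)\lesssim\ms_j(\bR_{22})$, which is the wrong direction. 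The correct duality is through the inverse: working with the $n\times n$ upper-triangular $\bR$ (which, assuming $\ms_n(\bM)\ne 0$, one may do after discarding the zero rows without changing any of the quantities involved), set $\bS = (\mmP_0\bR^{-1}\mmP_0)^T = \begin{pmatrix}\bR_{22}^{-T}&-\bR_{22}^{-T}\bR_{12}^T\bR_{11}^{-T}\\ \bO&\bR_{11}^{-T}\end{pmatrix}$, where $\mmP_0$ swaps the first $k$ with the last $n-k$ indices. Then $\bS_{11}^{-1}\bS_{12} = -(\bR_{11}^{-1}\bR_{12})^T$, $\omega_{j'}(\bS_{11})=\gamma_{j'}(\bR_{22})$, and $\gamma_{i'}(\bS_{22})=\omega_{i'}(\bR_{11})$, so $\rho(\bS,n-k)=\rho(\bR,k)\le f$. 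Applying your first-part bound to $\bS$ and using $\ms_{j'}(\bS)=1/\ms_{n-j'+1}(\bR)$ and $\ms_{j'}(\bS_{11})=1/\ms_{n-k-j'+1}(\bR_{22})$ gives, after the substitution $j=n-k-j'+1$, exactly $\ms_j(\bR_{22})\le\sqrt{1+f^2k(n-k)}\,\ms_{j+k}(\bM)$. So the duality idea is right, but it runs through $\bR^{-1}$, not through a permuted transpose of $\bR$ itself, and the proposal as written skips the step that makes the hypothesis carry over.
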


\begin{remark}
\label{remark}
    Theorem \ref{th_srrqr1} gives bounds on the ratios $\frac{\ms_i(\bM)}{\ms_i(\bR_{11})}$ and $\frac{\ms_j(\bR_{22})}{\ms_{j+k}(\bM)}$ for better readability.  For this, we assume that $\ms_{j+k}(\bM)\neq 0$, which implies that $\operatorname{rank}(\bM) \geq k$ and $\operatorname{det}(\bR_{11})\neq 0$. Thus, the denominator of $\frac{\ms_i(\bM)}{\ms_i(\bR_{11})}$ is also nonzero. If $\ms_{j_0 + k}(\bM) = 0$ for some $j_0$, then we have $\ms_{j_0}(\bR_{22}) = 0$. This remark applies to Theorem \ref{th_rsrrqr1} and \ref{th_rsrrqr_square} as well. 
\end{remark}
	
With our notation, $\rho(\bR,k) =  \max\limits_{1\leq i \leq k,\ 1\leq j \leq n-k}\sqrt{(\bR_{11}^{-1} \bR_{12})_{i,j}^2 + \mw_i^2(\bR_{11})\mr_j^2(\bR_{22})} $, where for each pair of columns $i,j$, the relation $\sqrt{(\bR_{11}^{-1} \bR_{12})_{i,j}^2 + \mw_i^2(\bR_{11})\mr_j^2(\bR_{22})}= |\operatorname{det}(\bar{\bR}_{11}^{(i,j)})/\operatorname{det}(\bR_{11})|$ is satisfied.  Thus, $\rho(\bR,k)$ represents the potential increase that can be achieved in $|\operatorname{det}(\bR_{11})|$. If $\rho(\bR,k) = 1$, then no column interchange can increase $|\operatorname{det}(\bR_{11})|$. When $1 \leq \rho(\bR,k) \leq f$ with $f$ close to $1$, $|\operatorname{det}(\bR_{11})|$ is nearly maximal.
 
\subsection{Random sketching}
\label{subsec:randomsketch}

We start by defining the $\me$-embedding property of a given subspace $\cV$.
\begin{definition}[$\me$-embedding]
\label{def_embedding}
Given a subspace $\cV \subset \R^m$, a sketching matrix $\mmO \in \R^{d\times m}$ is called an $\me$-embedding of $\cV$ if 
\be \label{sk_def1}
		|\langle \mmO \bx,\mmO \by\rangle - \langle \bx,\by \rangle| \leq \me \|\bx\|_2 \|\by \|_2,\ \forall \bx,\by \in \cV.
		\ee
\end{definition}
We note that \eqref{sk_def1} also implies the following inequality:
	\be \label{sk_def2}
	| \|\mmO \bx\|_2^2 - \|\bx\|_2^2 | \leq \me \|\bx\|_2^2.
	\ee

Since the subspace to be embedded is often not known in advance, the mapping $\mmO$ is constructed using probabilistic techniques to ensure, with high probability, that it satisfies the $\me$-embedding property for any fixed subspace $\cV \subset \R^m$. Such mappings $\mmO$ are known as oblivious subspace embeddings (OSEs).

 \begin{definition}
\label{def_oblivious_embeddings}
A random sketching matrix $\mmO \in \R^{d\times m}$ is an oblivious subspace embedding with parameters $(\me,\md,n)$ if it satisfies the $\me$-embedding property with probability at least $1-\md$ for any fixed $n$-dimensional subspace $\cV \subset \R^m$, that is,
\be
\mathbb{P}(\{\forall \bx,\by \in \cV, |\langle \mmO \bx,\mmO \by\rangle - \langle \bx,\by \rangle| \leq \me \|\bx\|_2 \|\by \|_2\}) \geq 1 - \md.
\ee
\end{definition}
We refer to an oblivious subspace embedding with parameters $(\me,\md,n)$ as $(\me,\md,n)$ OSE.  Different choices exist for the random sketching matrix $\mmO$. A very common choice consists in taking $\mmO$ to be a matrix whose elements are i.i.d standard Gaussian variables rescaled by $1/d$. In this case, $\mmO$ is an $(\me,\md,n)$ OSE if the sketch dimension is $d = O(\me^{-2}(n-log(\md))$ \cite{skt_david}.  The application of $\mmO$ to a dense matrix $\bM$ is easy to parallelize and can rely on dense optimized kernels. However its computational cost is relatively large, $2dmn$ flops. This cost can be decreased by using structured random transforms as the subsampled randomized Hadamard transform (SRHT), defined as
\be
\mmO = \sqrt{\frac{m}{d}} \bP \bH \bD.
\ee
Here $\bD$ is an $m \times m$ diagonal matrix whose diagonal entries are 
independent Rademacher random variables, with values $\pm 1$ taken with  equal probability. $\bH$ is an $m \times m$ Hadamard matrix and	$\bP$ is a $d \times m$ uniform row-sampling (with replacement) matrix. It costs $O(mnlog(m))$ flops to compute $\mmO \bM$. Given $\me$ and $\md$, the required sampling size is $d = \mathcal{O}(\me^{-2}(n+log(m-\md))log(n-\md))$ as shown in \cite{skt_david}.

The following theorem relates the singular values of a matrix $\bM$ to the singular values of the matrix $\bM^{sk} = \mmO \bM$ formed by the sketch of the columns of $\bM$, see e.g. \cite{rokhlin_fast_2008,skt_david,rgs, cqrrpt}. We include a proof for completeness.
\begin{theorem}
\label{th_sk_svd}
        Assume that $\mmO \in \R^{d \times m}$ is an $\me$-embedding of a subspace $\cV \ziji \R^{m}$ and let $\bM \in \R^{m \times n}$ be a matrix with $range(\bM)\ziji \cV$, $\bM^{sk} = \mmO \bM$. Then
        \be \label{ieq_prems}
        \sqrt{1 - \me}\  \ms_i(\bM) \leq \ms_i(\bM^{sk}) \leq \sqrt{1 + \me}\  \ms_i(\bM), \; 1 \leq i \leq d.
        \ee
    \end{theorem}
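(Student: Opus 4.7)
The plan is to invoke the Courant--Fischer min-max characterization of singular values and push the $\me$-embedding inequality from (\ref{sk_def2}) through it. Since $\operatorname{range}(\bM) \subset \cV$, for every $\bx \in \R^n$ the vector $\bM\bx$ lies in $\cV$, so applying \eqref{sk_def2} with $\bM\bx$ in place of $\bx$ gives
\[
(1-\me)\,\|\bM\bx\|_2^2 \;\leq\; \|\mmO \bM \bx\|_2^2 \;=\; \|\bM^{sk}\bx\|_2^2 \;\leq\; (1+\me)\,\|\bM\bx\|_2^2,
\]
valid for every $\bx \in \R^n$. Dividing by $\|\bx\|_2^2$ and taking square roots gives a pointwise two-sided comparison between the Rayleigh quotients of $\bM$ and of $\bM^{sk}$.

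Next I would recall the min-max formula
\[
\ms_i(\bM) \;=\; \max_{\substack{S \subset \R^n\\ \dim S = i}} \;\min_{\substack{\bx \in S\\ \bx \neq 0}} \frac{\|\bM \bx\|_2}{\|\bx\|_2},
\]
and the analogous formula for $\bM^{sk}$. Using the elementary fact that if $f(\bx)\leq g(\bx)$ pointwise on a set then $\min f \leq \min g$ (evaluated at the minimizer of $g$), the pointwise inequalities above transfer to each inner minimum over an $i$-dimensional subspace $S$, yielding
\[
\sqrt{1-\me}\;\min_{\bx \in S\setminus\{0\}} \frac{\|\bM\bx\|_2}{\|\bx\|_2} \;\leq\; \min_{\bx \in S\setminus\{0\}} \frac{\|\bM^{sk}\bx\|_2}{\|\bx\|_2} \;\leq\; \sqrt{1+\me}\;\min_{\bx \in S\setminus\{0\}} \frac{\|\bM\bx\|_2}{\|\bx\|_2}.
\]
Taking the max over $i$-dimensional subspaces $S$ on both sides preserves the inequalities and gives exactly the stated bound $\sqrt{1-\me}\,\ms_i(\bM) \leq \ms_i(\bM^{sk}) \leq \sqrt{1+\me}\,\ms_i(\bM)$.

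I expect no genuine obstacle here; the proof is essentially a one-page application of Courant--Fischer once the range hypothesis is used to justify applying the $\me$-embedding to arbitrary vectors in $\operatorname{range}(\bM)$. The only subtlety worth pointing out is the index range $1\leq i \leq d$: since $\bM^{sk} \in \R^{d\times n}$ has at most $d$ nonzero singular values, the min-max argument over $i$-dimensional subspaces of $\R^n$ is meaningful in this range, and for indices beyond $d$ the statement is not claimed (though $\ms_i(\bM^{sk})=0$ in that case and the upper bound still trivially holds).
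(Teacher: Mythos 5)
Your proof is correct and uses essentially the same approach as the paper: both apply the Courant--Fischer variational characterization of singular values and push the pointwise $\me$-embedding bound $(1-\me)\|\bM\bx\|_2^2 \leq \|\bM^{sk}\bx\|_2^2 \leq (1+\me)\|\bM\bx\|_2^2$ through it. The only cosmetic difference is that you use the max-min form while the paper uses the equivalent min-max form; the mechanism is identical.
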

    \begin{proof}
        Using the min-max theorem, the singular values of $\bM$ are expressed as follows:
        \be
        \ms_i(\bM) = \min\limits_{\cU \text{ subspace of } \R^n \atop dim(\cU) = n+1-i} \max\limits_{\bx \in \cU \atop \|\bx\|_2 = 1} \|\bM \bx\|_2.
        \ee
        Therefore, we have
        \be
        \ms_i(\bM^{sk}) = \min\limits_{\cU \text{ subspace of } \R^n \atop dim(\cU) = n+1-i} \max\limits_{\bx \in \cU \atop \|\bx\|_2 = 1} \|\bM^{sk} \bx\|_2 \leq \min\limits_{\cU \text{ subspace of } \R^n \atop dim(\cU) = n+1-i} \max\limits_{\bx \in \cU \atop \|\bx\|_2 = 1} \sqrt{1 + \me}\  \|\bM \bx\|_2 = \sqrt{1 + \me}\  \ms_i(\bM).
        \ee
        A similar reasoning can be used to establish the lower bound.
    \end{proof}
    
Random sketching is often used to approximate the solution of a least squares problem $\min\limits_{\bx\in \R^n}\|\bM\bx - \bb\|_2$, where $\bM \in \R^{m\times n},\; \bb\in \R^m$. The following lemma (see Theorem 12 of \cite{sketch_lsq} or Section 10.2 of \cite{Martinsson_Tropp_2020}) shows that the least squares residual can be nearly preserved by random sketching. For completeness, we include its proof in this paper. 

\begin{lemma}[Sketched least squares problem] \label{lma1}
Given a matrix $\bM\in \Rmn$ and a vector $\bb\in\R^n$, assume that $\mmO\in \Rdm$ is an $\me$-embedding of $range([\bM, \bb])$, $i.e.\ \mmO$ satisfies $\eqref{sk_def1}$ for all vectors from $range([\bM, \bb])$. Let $x^{*} = \argmin\limits_{x\in \R^n} \| \bM\bx-\bb\|_2$, $\hat{\bx} = \argmin\limits_{x\in \R^n}\| \mmO ( \bM\bx-\bb )\|_2$. We have: 
\be \label{sk_lsp}
\frac{1}{\sqrt{1+\me}}\|\mmO ( \bM\hat{\bx}-\bb )\|_2 \leq \| \bM\bx^*-\bb\|_2 \leq \frac{1}{\sqrt{1-\me}}\|\mmO ( \bM\hat{\bx}-\bb )\|_2.
\ee
\end{lemma}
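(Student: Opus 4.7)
The plan is to show both inequalities by combining two ingredients: the optimality of $\bx^*$ and $\hat{\bx}$ for their respective problems, and the $\me$-embedding property applied to the residual vectors $\bM\bx^*-\bb$ and $\bM\hat{\bx}-\bb$. The crucial preliminary observation is that both residuals lie in $\operatorname{range}([\bM,\bb])$, since each is a linear combination of the columns of $\bM$ together with $\bb$. Hence the hypothesis that $\mmO$ is an $\me$-embedding of $\operatorname{range}([\bM,\bb])$ guarantees, via inequality \eqref{sk_def2}, that
\[
(1-\me)\|\bv\|_2^2 \;\leq\; \|\mmO \bv\|_2^2 \;\leq\; (1+\me)\|\bv\|_2^2
\quad\text{for } \bv\in\{\bM\bx^*-\bb,\;\bM\hat{\bx}-\bb\}.
\]

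For the upper bound $\|\bM\bx^*-\bb\|_2 \leq (1-\me)^{-1/2}\|\mmO(\bM\hat{\bx}-\bb)\|_2$, I would first use the optimality of $\bx^*$ (without sketching) to write $\|\bM\bx^*-\bb\|_2 \leq \|\bM\hat{\bx}-\bb\|_2$, and then apply the lower half of the embedding inequality to the residual $\bM\hat{\bx}-\bb$ to obtain $\|\bM\hat{\bx}-\bb\|_2 \leq (1-\me)^{-1/2}\|\mmO(\bM\hat{\bx}-\bb)\|_2$. Chaining these two inequalities yields the desired bound.

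For the lower bound $(1+\me)^{-1/2}\|\mmO(\bM\hat{\bx}-\bb)\|_2 \leq \|\bM\bx^*-\bb\|_2$, I would reverse the roles: use the optimality of $\hat{\bx}$ for the sketched problem to write $\|\mmO(\bM\hat{\bx}-\bb)\|_2 \leq \|\mmO(\bM\bx^*-\bb)\|_2$, and then apply the upper half of the embedding inequality to $\bM\bx^*-\bb$ to get $\|\mmO(\bM\bx^*-\bb)\|_2 \leq (1+\me)^{1/2}\|\bM\bx^*-\bb\|_2$. Rearranging gives the claim.

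There is no real obstacle here; the proof is essentially bookkeeping once one notes that both candidate residuals lie in the embedded subspace. The only mild subtlety is making sure to apply the correct direction of the embedding inequality in each of the two arguments (lower bound on $\|\mmO\bv\|_2$ in one direction of the chain, upper bound in the other). No macro or environment beyond those already in the paper is required.
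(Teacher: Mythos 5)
Your proof is correct and follows precisely the paper's own argument: optimality of $\bx^*$ combined with the lower half of the embedding inequality applied to $\bM\hat{\bx}-\bb$ gives the upper bound, and optimality of $\hat{\bx}$ combined with the upper half of the embedding inequality applied to $\bM\bx^*-\bb$ gives the lower bound. No differences in approach or substance.
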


\begin{proof}
Since $\bx^*$ is the minimizer of the original least squares problem, by applying \eqref{sk_def2} we obtain:
\be
\| \bM\bx^*-\bb\|_2 = \min\limits_{x\in \R^n} \| \bM\bx - \bb\|_2 \leq \|\bM\hat{\bx}-\bb \|_2 \leq \frac{1}{\sqrt{1-\me}}\|\mmO ( \bM\hat{\bx}-\bb )\|_2.
\ee
Similarly, since $\hat{\bx}$ is the minimizer of the sketched least squares problem, we have:
\be
\|\mmO ( \bM\hat{\bx}-\bb )\|_2 = \min\limits_{x\in \R^n} \|\mmO ( \bM\bx-\bb )\|_2 \leq 
\|\mmO ( \bM\bx^*-\bb )\|_2 \leq \sqrt{1+\me}\| \bM\bx^*-\bb\|_2.
\ee
\end{proof}
 
\section{Random sketching, angles, volumes, and determinants}
\label{section_sketch_geo}

In this section, we study geometric properties that can be nearly preserved by an $\varepsilon$-embedding of a subspace $\cV \subset \R^m$. We first consider the angle between two vectors, then we extend the result to the angle between a vector and a subspace of $\cV$. We finally generalize the result to the volume of a parallelotope, corresponding to the area of a parallelogram in the two-dimensional (2D) case and the volume of a parallelepiped in the three-dimensional (3D) case.  

	
\medskip
\begin{definition}[see, e.g. Page 154 of \cite{strang2006linear}]
The angle between two nonzero vectors $\bv_1$ and $\bv_2$ is defined as:
\be \label{ang_vv}
\ang \langle \bv_1,\bv_2 \rangle = arccos(\frac{\langle \bv_1,\bv_2 \rangle}{\|\bv_1\|_2 \|\bv_2\|_2}).
\ee
\end{definition} 

\begin{lemma}
Assume that the sketching matrix $\mmO \in \bR^{d \times m}$ is an $\me$-embedding of the subspace $\cV \subset \bR^m$, $i.e.\ $the inequality \eqref{sk_def1} holds. Consider any two nonzero vectors $\bv_1,\ \bv_2\in \cV$, and let $\bv_1^{sk} = \mmO \bv_1,\ \bv_2^{sk} = \mmO \bv_2$. The following holds:
\be \label{ieq_ang_vv}
\frac{cos \ang \langle \bv_1,\bv_2 \rangle - \me}
{1+\me}\leq cos \ang \langle \bv_1^{sk}, \bv_2^{sk} \rangle \leq \frac{cos \ang \langle \bv_1,\bv_2 \rangle + \me}{1-\me}.
\ee
\end{lemma}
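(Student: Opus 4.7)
The plan is to expand the definition
\[
cos \ang \langle \bv_1^{sk}, \bv_2^{sk} \rangle = \frac{\langle \mmO \bv_1, \mmO \bv_2\rangle}{\|\mmO \bv_1\|_2 \|\mmO \bv_2\|_2},
\]
and then bound the numerator and the denominator separately by exploiting the two consequences of the $\me$-embedding hypothesis that are already stated in Section~\ref{subsec:randomsketch}: the inner-product distortion \eqref{sk_def1} and the norm distortion \eqref{sk_def2}.

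First, applying \eqref{sk_def1} with $\bx = \bv_1$ and $\by = \bv_2$ I would get the two-sided bound
\[
\langle \bv_1, \bv_2\rangle - \me \|\bv_1\|_2 \|\bv_2\|_2 \;\leq\; \langle \mmO \bv_1, \mmO \bv_2\rangle \;\leq\; \langle \bv_1, \bv_2\rangle + \me \|\bv_1\|_2 \|\bv_2\|_2.
\]
Second, applying \eqref{sk_def2} separately to $\bv_1$ and $\bv_2$ (and using $\me \in (0,1)$ together with the fact that $\bv_1, \bv_2$ are nonzero so the norms are strictly positive), then taking square roots and multiplying the two one-sided inequalities, I would obtain
\[
(1-\me)\|\bv_1\|_2 \|\bv_2\|_2 \;\leq\; \|\mmO \bv_1\|_2 \|\mmO \bv_2\|_2 \;\leq\; (1+\me)\|\bv_1\|_2 \|\bv_2\|_2,
\]
so the denominator in the definition of $cos \ang \langle \bv_1^{sk}, \bv_2^{sk}\rangle$ is strictly positive and the ratio is well defined.

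To derive the upper bound of the lemma, I would divide the upper bound on the numerator by the lower bound on the denominator, then cancel the common factor $\|\bv_1\|_2 \|\bv_2\|_2$ to recover $cos \ang \langle \bv_1, \bv_2\rangle$, producing the target quantity $\frac{cos \ang \langle \bv_1, \bv_2\rangle + \me}{1-\me}$. The lower bound follows by the symmetric pairing of the lower numerator bound with the upper denominator bound. The main subtlety I expect is managing the direction of the inequality under division, because the two-sided bound on the numerator is not a priori signed; I would either split into the cases $\langle \bv_1, \bv_2\rangle \pm \me \|\bv_1\|_2\|\bv_2\|_2 \geq 0$ and $< 0$, or observe that among the possible directional pairings of numerator and denominator bounds, $\frac{cos \ang \langle \bv_1, \bv_2\rangle + \me}{1-\me}$ and $\frac{cos \ang \langle \bv_1, \bv_2\rangle - \me}{1+\me}$ are the looser ones, so the stated bracket is valid in all cases once the tightest directional combination has been derived.
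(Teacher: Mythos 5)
Your expansion-and-bound strategy is exactly the paper's own: expand the sketched cosine as $\langle\mmO\bv_1,\mmO\bv_2\rangle/(\|\mmO\bv_1\|_2\|\mmO\bv_2\|_2)$, bound the numerator via \eqref{sk_def1} and the denominator via \eqref{sk_def2}, then combine. The individual bounds you write down are correct. However, the subtlety you flag---the sign of the numerator bound---is precisely where this argument (and, in fact, the paper's own proof, which silently ignores it) breaks down, and your proposed fix does not repair it. You claim that $\frac{\cos\ang\langle\bv_1,\bv_2\rangle + \me}{1-\me}$ and $\frac{\cos\ang\langle\bv_1,\bv_2\rangle - \me}{1+\me}$ are the \emph{looser} of the possible directional pairings; this is backwards when the relevant numerator bound is negative. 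If $\cos\ang\langle\bv_1,\bv_2\rangle + \me < 0$, dividing that negative quantity by the \emph{smaller} denominator $1-\me$ makes it \emph{more} negative, so $\frac{\cos\ang\langle\bv_1,\bv_2\rangle+\me}{1-\me}<\frac{\cos\ang\langle\bv_1,\bv_2\rangle+\me}{1+\me}$: the stated expression is the \emph{tighter} of the two candidates, not the looser. Symmetrically, the stated lower bound is too large when $\cos\ang\langle\bv_1,\bv_2\rangle - \me < 0$. Multiplying a two-sided numerator bound and a two-sided denominator bound only yields $N/D \le N_+/D_-$ when $N_+ \ge 0$; for $N_+<0$ the supremum over the box is $N_+/D_+$.

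The gap is not merely cosmetic: the lemma as stated can fail. Take $\bv_1\perp\bv_2$ orthonormal, so $\cos\ang\langle\bv_1,\bv_2\rangle=0$, and let $\mmO$ act on $\cV=span\{\bv_1,\bv_2\}$ so that $\|\mmO\bv_1\|_2=\|\mmO\bv_2\|_2=1$ and $\langle\mmO\bv_1,\mmO\bv_2\rangle=-\me$. For $\bx=a_1\bv_1+b_1\bv_2$ and $\by=a_2\bv_1+b_2\bv_2$ one has $\langle\mmO\bx,\mmO\by\rangle-\langle\bx,\by\rangle=-\me(a_1b_2+a_2b_1)$, whose absolute value is at most $\me\sqrt{(a_1^2+b_1^2)(a_2^2+b_2^2)}=\me\|\bx\|_2\|\by\|_2$ since $(a_1a_2-b_1b_2)^2\ge 0$; so $\mmO$ is a genuine $\me$-embedding of $\cV$. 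Yet $\cos\ang\langle\bv_1^{sk},\bv_2^{sk}\rangle=-\me$, which is strictly below the claimed lower bound $-\me/(1+\me)$. So the inequality as stated needs the extra hypothesis $\cos\ang\langle\bv_1,\bv_2\rangle\ge\me$ (ensuring both numerator bounds are nonnegative), or else the $1\mp\me$ in the denominators must be swapped on the branch where the corresponding numerator is negative. To be clear, none of this propagates into the rest of the paper: in the next lemma the maximizing $\bu\in\cK$ can always be chosen so that both the original and the sketched cosines are nonnegative, and the volume results used in Lemma~\ref{lma2} are actually proved via the least-squares Lemma~\ref{lma1}, not via either angle lemma.
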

	
\begin{proof}
We have that
\[
cos \ang \langle \bv_1^{sk}, \bv_2^{sk} \rangle = \frac{\langle \mmO \bv_1, \mmO \bv_2 \rangle}{\| \mmO \bv_1 \|_2 \| \mmO \bv_2 \|_2}.
\] 
From the $\varepsilon$-embedding property \eqref{sk_def1} and \eqref{sk_def2}, it can be seen that the following inequalities hold:
		\[
		\langle  \bv_1,  \bv_2 \rangle -\me\|\bv_1\|_2 \|\bv_2\|_2 \leq \langle \mmO \bv_1, \mmO \bv_2 \rangle \leq \langle  \bv_1,  \bv_2 \rangle + \me \|\bv_1 \|_2 \|\bv_2 \|_2,
		\]
		\[
		(1-\me)^{\frac{1}{2}}\| \bv_1 \|_2 \leq
		\| \mmO \bv_1 \|_2 \leq
		(1+\me)^{\frac{1}{2}} \| \bv_1\|_2, 
		\]
		\[
		(1-\me)^{\frac{1}{2}}\| \bv_2 \|_2 \leq
		\| \mmO \bv_2 \|_2 \leq
		(1+\me)^{\frac{1}{2}} \| \bv_2\|_2 .
		\]
Thus we obtain the following upper bound:
\[cos \ang \langle \bv_1^{sk}, \bv_2^{sk} \rangle \leq  \frac{\langle  \bv_1,  \bv_2 \rangle + \me \|\bv_1 \|_2 \|\bv_2 \|_2}{(1-\me)\|\bv_1\|_2 \|\bv_2\|_2} = \frac{cos \ang \langle \bv_1,\bv_2 \rangle + \me}{1-\me}. \] 
The lower bound is obtained by a similar reasoning.  
\end{proof}
We now consider the angle between a vector $\bx$ and a subspace $\cK$, we first recall its definition from \cite{golub_angle_space}.
\begin{definition}
\label{angle_vecspace}
The angle between a nonzero vector $\bv$ and a subspace $\cK$ is defined as 
\be \label{ang_vsp}
		\ang \langle \bv, \cK \rangle = arccos( \max\limits_{\bu\in \cK, \|\bu\|_2 \neq \bO}\frac{\langle \bv,\bu \rangle}{\|\bv\|_2 \|\bu\|_2}).
		\ee 
\end{definition}
When $\bv \notin \cK^{\perp}$, we have the following proposition.
\begin{prop}
\label{pp31}
    The angle between a nonzero vector $\bv$ and a subspace $\cK$ is equal to the angle between $\bv$ and the orthogonal projection of $\bv$ onto $\cK$, provided that $\bv \notin \cK^{\perp}$, that is
    \be
    \label{eq_angle_lsq}
    \ang \langle \bv, \cK \rangle = \ang \langle \bv, \cP(\bv) \rangle,
    \ee
    where $\cP$ is the orthogonal projector onto $\cK$. Furthermore,
    \be
    \label{eq_triangle_lsq}
    \|\cP(\bv)\|_2 = \|\bv\|_2 cos(\ang\langle \bv, \cK \rangle ),\ \|\bv - \cP(\bv)\|_2 = \|\bv\|_2 sin(\ang\langle \bv, \cK \rangle ).
    \ee
\end{prop}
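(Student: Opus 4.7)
The plan is to unpack the definition of $\ang\langle\bv,\cK\rangle$ by showing that the maximum in \eqref{ang_vsp} is attained at $\bu = \cP(\bv)$, and then derive the norm identities in \eqref{eq_triangle_lsq} from the Pythagorean theorem.

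First I would exploit the defining property of the orthogonal projector, namely $\bv - \cP(\bv) \perp \cK$. For any nonzero $\bu \in \cK$ this gives the decomposition
\[
\langle \bv, \bu\rangle = \langle \cP(\bv) + (\bv - \cP(\bv)), \bu\rangle = \langle \cP(\bv), \bu\rangle,
\]
because $\langle \bv - \cP(\bv), \bu\rangle = 0$. Applying Cauchy--Schwarz, $\langle \cP(\bv), \bu\rangle \leq \|\cP(\bv)\|_2 \|\bu\|_2$, with equality whenever $\bu$ is a positive multiple of $\cP(\bv)$ (well defined since $\bv \notin \cK^\perp$ implies $\cP(\bv) \neq \bO$, so $\cP(\bv)$ itself is an admissible maximizer in $\cK$). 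Dividing by $\|\bv\|_2\|\bu\|_2$ and taking the maximum over $\bu \in \cK$ yields
\[
\max_{\bu\in\cK,\ \bu\neq \bO} \frac{\langle \bv, \bu\rangle}{\|\bv\|_2\|\bu\|_2} = \frac{\|\cP(\bv)\|_2}{\|\bv\|_2} = \frac{\langle \bv, \cP(\bv)\rangle}{\|\bv\|_2\|\cP(\bv)\|_2},
\]
where the last equality uses $\langle \bv, \cP(\bv)\rangle = \langle \cP(\bv), \cP(\bv)\rangle = \|\cP(\bv)\|_2^2$, again by orthogonality of $\bv - \cP(\bv)$ to $\cP(\bv)\in\cK$. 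Taking $\arccos$ of both sides proves \eqref{eq_angle_lsq}.

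Next I would read off the first identity in \eqref{eq_triangle_lsq} directly from the displayed expression above, since $\cos(\ang\langle \bv, \cK\rangle) = \|\cP(\bv)\|_2/\|\bv\|_2$. For the second identity I would invoke the Pythagorean theorem applied to the orthogonal decomposition $\bv = \cP(\bv) + (\bv - \cP(\bv))$, giving $\|\bv\|_2^2 = \|\cP(\bv)\|_2^2 + \|\bv-\cP(\bv)\|_2^2$. Combining this with $\sin^2(\theta) = 1 - \cos^2(\theta)$ evaluated at $\theta = \ang\langle\bv,\cK\rangle$ produces $\|\bv-\cP(\bv)\|_2^2 = \|\bv\|_2^2 \sin^2(\ang\langle\bv,\cK\rangle)$, and taking the nonnegative square root finishes the proof.

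There is no real obstacle here; the only subtlety is verifying that the maximum in Definition \ref{angle_vecspace} is actually attained (rather than merely a supremum), which is exactly where the hypothesis $\bv\notin\cK^\perp$ is used to guarantee $\cP(\bv)\neq \bO$ and hence produce an explicit maximizer in $\cK$.
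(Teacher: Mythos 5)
Your proof is correct and follows essentially the same route as the paper's: reduce $\langle\bv,\bu\rangle$ to $\langle\cP(\bv),\bu\rangle$ by orthogonality of $\bv-\cP(\bv)$ to $\cK$, then apply Cauchy--Schwarz to identify $\cP(\bv)$ as the maximizer. The only cosmetic difference is that you derive the norm identities in \eqref{eq_triangle_lsq} explicitly via the Pythagorean decomposition $\|\bv\|_2^2 = \|\cP(\bv)\|_2^2 + \|\bv-\cP(\bv)\|_2^2$, whereas the paper just remarks that they follow from the definitions once \eqref{eq_angle_lsq} is established.
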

\begin{proof}
    We have that $\cP(\bv) \neq \bO$ if $\bv \notin \cK^{\perp}$. Note that \eqref{eq_triangle_lsq} can be derived from the definition of trigonometric functions if \eqref{eq_angle_lsq} holds, it is sufficient to prove that
    \be
    \label{ieq_pp31}
    \langle \bv , \frac{\cP(\bv)}{\|\cP(\bv) \|_2} \rangle \geq \langle \bv , \frac{\bu}{\|\bu \|_2} \rangle,\ \forall \bu \in \cK \ \text{and}\ \bu \neq \bO.
    \ee
    Note that $\bv = \cP(\bv) + (\bI - \cP)(\bv)$ and $\cP$ is the orthogonal projector, the left of \eqref{ieq_pp31} is equal to $\|\cP(\bv)\|_2$. Hence it is sufficient to prove that 
    \be
    \langle \bu , \bv \rangle \leq  \|\bu \|_2 \| \cP(\bv) \|_2,\ \forall \bu \in \cK \ \text{and}\ \bu \neq \bO.
    \ee
    Again using the orthogonality of $\cP$, we have
    \be
    \langle \bu,\bv \rangle = \langle \bu, \cP(\bv) \rangle  + \langle \bu , (\bI - \cP)(\bv) \rangle =  \langle \bu, \cP(\bv) \rangle.
    \ee
    The proof is finished by using the Cauchy inequality.
\end{proof}
Using the definitions \ref{def_embedding} and \ref{angle_vecspace}, we obtain the following lemma.
\begin{lemma}
Assume that the sketching matrix $\mmO \in \bR^{d \times m}$ is an $\me$-embedding of a subspace $\cV \subset \bR^m$. For any vector $\bv\in\cV$ and any subspace $\cK \subset \cV$, let $\bv^{sk} = \mmO \bv,\  \cK^{sk} = \{\bu^{sk} | \bu^{sk} = \mmO \bu, \bu\in \cK \}$. We have that
		\be
		\frac{cos \ang \langle \bv,\cK \rangle - \me}{1+\me}\leq
		cos \ang \langle \bv^{sk}, \cK^{sk} \rangle \leq
		\frac{cos \ang \langle \bv,\cK \rangle + \me}{1-\me}.
		\ee
	\end{lemma}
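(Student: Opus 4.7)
The plan is to prove both bounds directly from the definition of $\ang \langle \bv^{sk}, \cK^{sk}\rangle$ as a maximum, and reduce each bound to the two-vector inequality \eqref{ieq_ang_vv} by choosing a clever witness vector in $\cK$ (respectively $\cK^{sk}$). No new geometric machinery beyond Proposition \ref{pp31} should be needed; the argument is essentially the vector-vector case applied to well-chosen maximizers.

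For the upper bound, I would use the equivalent form $\cos \ang \langle \bv^{sk}, \cK^{sk}\rangle = \max_{\bu \in \cK,\, \bu \neq \bO} \frac{\langle \mmO \bv, \mmO \bu\rangle}{\|\mmO \bv\|_2 \|\mmO \bu\|_2}$ (any element of $\cK^{sk}\setminus\{\bO\}$ is $\mmO \bu$ for some $\bu \in \cK$ with $\mmO \bu \neq \bO$; if the maximum is zero there is nothing to prove). Let $\bu^*$ attain this maximum. Since $\bu^*, \bv \in \cV$, I can apply the $\me$-embedding property \eqref{sk_def1} to $\langle \mmO \bv, \mmO \bu^*\rangle$ and \eqref{sk_def2} to bound $\|\mmO \bv\|_2$ and $\|\mmO \bu^*\|_2$ from below by $\sqrt{1-\me}\|\bv\|_2$ and $\sqrt{1-\me}\|\bu^*\|_2$. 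This yields
\[
\cos \ang \langle \bv^{sk}, \cK^{sk}\rangle \;\leq\; \frac{\langle \bv, \bu^*\rangle + \me \|\bv\|_2 \|\bu^*\|_2}{(1-\me)\|\bv\|_2 \|\bu^*\|_2} \;=\; \frac{\cos \ang \langle \bv, \bu^*\rangle + \me}{1-\me},
\]
and since $\bu^* \in \cK$, the numerator is dominated by $\cos \ang \langle \bv, \cK\rangle + \me$ by the definition \eqref{ang_vsp}.

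For the lower bound, I would argue symmetrically by choosing a witness in $\cK$ that attains $\cos \ang \langle \bv, \cK\rangle$ and then sketching it. If $\bv \notin \cK^\perp$, Proposition \ref{pp31} identifies the natural candidate $\bu^{**} = \cP(\bv) \in \cK$ (when $\bv \in \cK^\perp$, the right-hand side $(\cos \ang \langle \bv, \cK\rangle - \me)/(1+\me) = -\me/(1+\me)$ is nonpositive and the bound is trivial, since the sketched cosine is always nonnegative by the sign trick $\bu \mapsto -\bu$ in the max). With this witness in hand,
\[
\cos \ang \langle \bv^{sk}, \cK^{sk}\rangle \;\geq\; \frac{\langle \mmO \bv, \mmO \bu^{**}\rangle}{\|\mmO \bv\|_2 \|\mmO \bu^{**}\|_2} \;\geq\; \frac{\langle \bv, \bu^{**}\rangle - \me \|\bv\|_2 \|\bu^{**}\|_2}{(1+\me)\|\bv\|_2 \|\bu^{**}\|_2} \;=\; \frac{\cos \ang \langle \bv, \cK\rangle - \me}{1+\me},
\]
using \eqref{sk_def1} upward on the numerator and \eqref{sk_def2} upward on both sketched norms.

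The main obstacle I anticipate is mostly bookkeeping rather than substance: ensuring the witness vectors are nonzero so that the ratios are well-defined, and handling the degenerate cases $\bv \in \cK^\perp$ or $\mmO \bu = \bO$ for some $\bu \in \cK$. These are dispatched by observing that both sides reduce to trivial inequalities. The core of the argument is just the vector-vector bound \eqref{ieq_ang_vv} applied once for each direction to an optimally chosen pair, exactly parallel in structure to the proof of the preceding lemma.
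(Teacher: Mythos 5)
Your proof is correct and takes essentially the same approach as the paper's: both reduce the subspace-angle bound to the vector-vector estimate by applying the $\me$-embedding inequality to the maximizing vector in the definition of the angle, the only cosmetic difference being that you name the maximizer $\bu^*$ (resp.\ $\bu^{**}$) explicitly and then bound, while the paper pushes the estimate inside the $\max$ before evaluating it. One small remark: for the lower bound, the chain of inequalities through the witness $\bu^{**}$ can wobble not only when $\bv\in\cK^\perp$ but whenever $\cos\ang\langle\bv,\cK\rangle<\me$ (the sketched inner product with $\bu^{**}$ may then be negative, and the denominator manipulation reverses); your observation that the sketched cosine is nonnegative already disposes of this slightly larger degenerate region, so it is worth stating the trivial case as $\cos\ang\langle\bv,\cK\rangle\le\me$ rather than $\bv\in\cK^\perp$.
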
	
\begin{proof}
Using Definition \ref{angle_vecspace} and the $\me$-embedding property from Definition \ref{def_embedding}, we obtain: 
\[
    cos \ang \langle \bv^{sk}, \cK^{sk} \rangle = \max\limits_{\bu^{sk}\in \cK,\|\bu^{sk}\|_2 \neq \bO}\frac{\langle \bv^{sk},\bu^{sk} \rangle}{\|\bv^{sk}\|_2\|\bu^{sk}\|_2} \leq \max\limits_{\bu\in \cK,\|\bu\|_2 \neq \bO} \frac{\langle \bv, \bu \rangle+\me \|\bv\|_2 \|\bu\|_2}{(1-\me)\|\bv\|_2 \|\bu\|_2} = \frac{cos\ang \langle \bv, \cK \rangle + \me }{1 - \me}. \] 
    The lower bound follows from a similar reasoning.
    \end{proof}

\begin{figure}[htbp]
\centering
    \begin{minipage}[c]{0.4\textwidth}
    \centering
    \centerline{\includegraphics[width=1.0\textwidth]{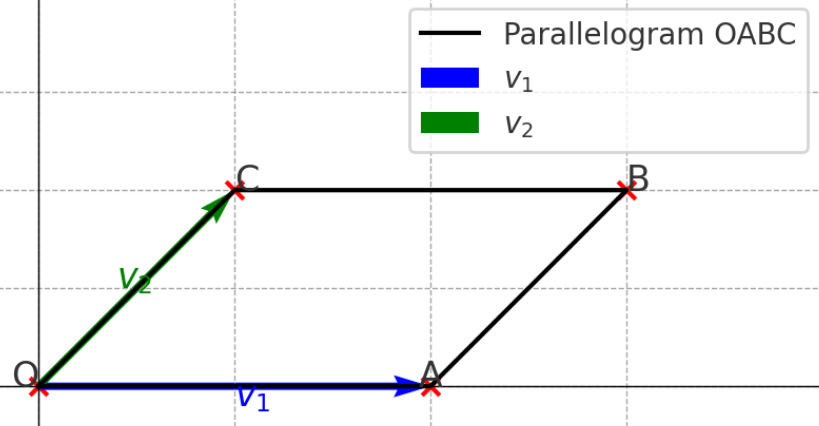}}
    \caption{parallelogram OABC}
    \label{fig:area}
    \end{minipage}
    \hfill
    \begin{minipage}[c]{0.48\linewidth}
    \centering
    \centerline{\includegraphics[width=1.0\textwidth]{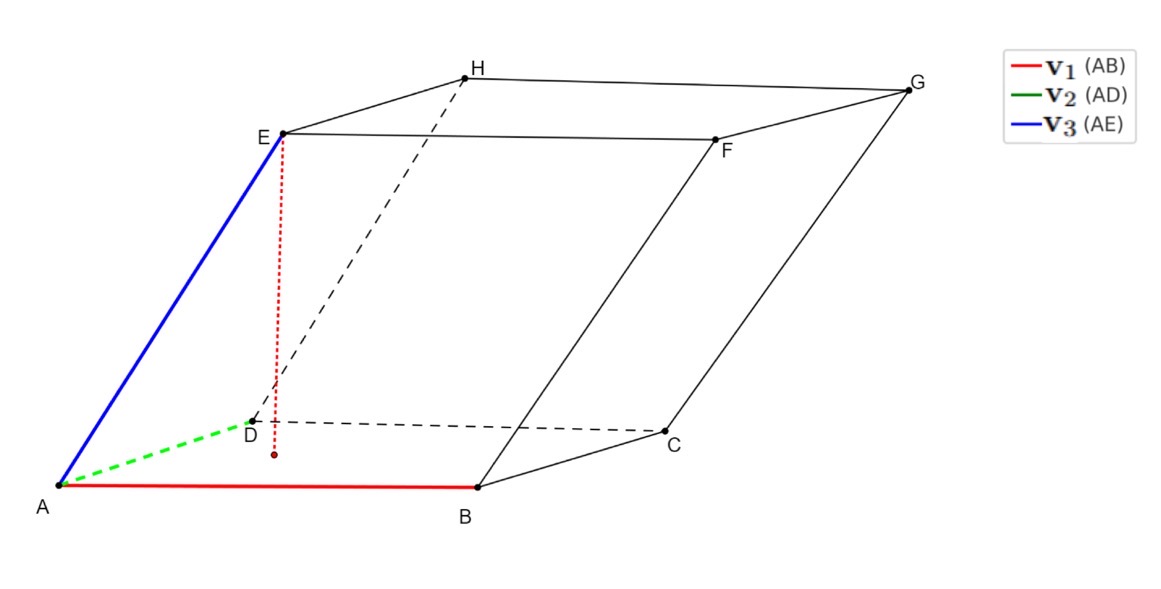}}
    \caption{volume of a $3\times3$ matrix $\bigg( \bv_1 \ ,\ \bv_2 \ ,\ \bv_3\bigg)$}
    \label{fig:volume}
    \end{minipage}
    
\end{figure}

We now discuss the geometric interpretation of the determinant of a matrix (for more details see e.g. \cite{MAALA} or \cite{strang2006linear}). Consider first a $2\times2$ matrix $\bM = \bigg( \bv_1 \ ,\ \bv_2\bigg) = \begin{pmatrix}
			m_{11}    & m_{12}\\
			m_{21}    & m_{22}
\end{pmatrix}$. The absolute value of its determinant, $|\operatorname{det}(\textbf{\bM})|$, is equal to the area of the parallelogram defined by $\bv_1$ and $\bv_2$. As illustrated in Figure \ref{fig:area}, let $OA$ denote $\bv_1$, $OC$ denote $\bv_2$, and let $\parallelogram OABC$ represent the parallelogram they define. Indeed, its area is:
\[
\begin{aligned}
    S_{\parallelogram OABC} &= \|OA\|_2 \|OC\|_2 sin\ang \langle OA, OC \rangle \\
             &= \|OA\|_2 \|OC\|_2 \sqrt{1 - cos^2\ang \langle OA, OC \rangle} \\
             &= \sqrt{\|OA\|_2^2 \|OC\|_2^2 - \langle OA,OC \rangle^2} \\
             &= \sqrt{(m_{11}^2 + m_{21}^2)(m_{12}^2 + m_{22}^2) - (m_{11}m_{12}+m_{21}m_{22})^2} \\
             &= |m_{11} m_{22} - m_{21} m_{12}| = |\operatorname{det}(\bM)|.
\end{aligned}
\]
In the case of a $3 \times 3$ matrix $\bM = \bigg( \bv_1 \ ,\ \bv_2 \ ,\ \bv_3\bigg)$, $|\operatorname{det}(\bM)|$ is equal to the volume of the parallelepiped formed by $\bv_1, \bv_2, \bv_3$. As illustrated in Figure \ref{fig:volume}, the volume is $V = S_{\parallelogram ABCD}\ h$, where $h$ is the height of the parallelepiped.  This formula can be generalized to larger dimensions as in the following lemma, which will be used later to relate the determinant of a matrix to the determinant of the matrix formed by the sketch of its columns. For a general matrix $\bM \in \R^{m\times n}$, \bM = $\bigg( \bv_1 \ , \ \bv_2\ ,\  \cdots \ , \ \bv_n\bigg)$, we refer to the volume of the $n-$dimensional box formed by its columns as $V(\bM)$, and we have: 
\be
V(\bM)= \sqrt{\operatorname{det}(\bM^T\bM)} = |\operatorname{det}(\textbf{R})| = \prod_{i=1}^{\min\{m,n\}} \ms_i(\bM),
\ee
where \textbf{R} is the square R factor obtained from the thin QR factorization of \bM, $i.e.\ \bM = \textbf{QR}, \textbf{Q}\in \R^{m\times n},\ \textbf{R}\in \R^{n \times n} $.

\begin{lemma}
\label{lma_volume}
Given a matrix $\bM \in \R^{m\times n}$, \bM = $\bigg( \bv_1 \ , \ \bv_2\ ,\  \cdots \ \bv_n\bigg)$, $\forall 1\leq i\leq n$, let $\bM_{n-1}\in\R^{m\times (n-1)}$ denote the matrix formed by removing the $i$-th column of $\bM$, that is $\bM_{n-1} = \bigg( \bv_1 \ , \cdots , \ \bv_{i-1}\ , \bv_{i+1} \  \cdots \ \bv_n\bigg)$. We have
\begin{eqnarray}
\label{eq_volume}
\nonumber
V(\bM) &=& V(\bM_{n-1}) \|\bv_i - \mathcal{P}(\bv_i)\|_2 = V(\bM_{n-1})\min\limits_{\bx \in \R^{n-1}}\| \bv_i - \bM_{n-1}\bx \|_2\\
&=& V(\bM_{n-1}) \|\bv_i\|_2 sin \ang \langle \bv_i , range({\bM}_{n-1}) \rangle,
\end{eqnarray} 
where $\mathcal{P}$ is the orthogonal projector onto the subspace $range(\bM_{n-1})$.
\end{lemma}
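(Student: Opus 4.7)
The plan is to establish the three displayed equalities in turn. The second and third reduce immediately to facts already at hand, so the main content lies in proving the first equality $V(\bM) = V(\bM_{n-1})\|\bv_i - \mathcal{P}(\bv_i)\|_2$.

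Since $V(\bM)^2 = \det(\bM^T \bM)$ and a column permutation of $\bM$ conjugates $\bM^T \bM$ by a permutation matrix, leaving its determinant unchanged, I would first reduce without loss of generality to $i=n$, writing $\bM = (\bM_{n-1}, \bv_n)$. The Gram matrix then takes the block form
\[
\bM^T \bM = \begin{pmatrix} \bM_{n-1}^T \bM_{n-1} & \bM_{n-1}^T \bv_n \\ \bv_n^T \bM_{n-1} & \|\bv_n\|_2^2 \end{pmatrix}.
\]
If $\bM_{n-1}$ has full column rank, the Schur complement identity gives
\[
\det(\bM^T \bM) = \det(\bM_{n-1}^T \bM_{n-1}) \cdot \left(\|\bv_n\|_2^2 - \bv_n^T \bM_{n-1} (\bM_{n-1}^T \bM_{n-1})^{-1} \bM_{n-1}^T \bv_n\right).
\]
Recognising $\bM_{n-1}(\bM_{n-1}^T \bM_{n-1})^{-1}\bM_{n-1}^T$ as the orthogonal projector $\mathcal{P}$ onto $\text{range}(\bM_{n-1})$, the parenthesised scalar becomes $\|\bv_n\|_2^2 - \|\mathcal{P}(\bv_n)\|_2^2 = \|\bv_n - \mathcal{P}(\bv_n)\|_2^2$ by the Pythagorean identity applied to the orthogonal decomposition $\bv_n = \mathcal{P}(\bv_n) + (\bv_n - \mathcal{P}(\bv_n))$. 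Taking square roots yields the first equality. When $\bM_{n-1}$ is rank-deficient, $V(\bM_{n-1}) = 0$ and $\mathrm{rank}(\bM) \leq \mathrm{rank}(\bM_{n-1}) + 1 < n$ forces $V(\bM) = 0$ as well, so the identity holds trivially on both sides.

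For the second equality, $\|\bv_i - \mathcal{P}(\bv_i)\|_2 = \min_{\bx \in \R^{n-1}}\|\bv_i - \bM_{n-1}\bx\|_2$ is the standard best-approximation characterization of the orthogonal projection onto $\text{range}(\bM_{n-1})$. For the third equality, I would invoke \eqref{eq_triangle_lsq} of Proposition \ref{pp31} applied with $\cK = \text{range}(\bM_{n-1})$; the degenerate case $\bv_i \in \text{range}(\bM_{n-1})^{\perp}$ excluded from Proposition \ref{pp31} is handled directly, since then $\mathcal{P}(\bv_i) = \mathbf{0}$ and $\ang\langle\bv_i,\text{range}(\bM_{n-1})\rangle = \pi/2$, so both sides of the equality equal $\|\bv_i\|_2$. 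The only genuinely delicate step is separating the rank-deficient case in the Schur complement argument; the remainder is routine bookkeeping.
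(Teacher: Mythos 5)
Your proof is correct, and it takes a genuinely different route from the paper's. After the common reduction to $i=n$ by permutation invariance, the paper works directly with the QR factorization $\bM = \bQ\bR$: it factors $\lab\operatorname{det}(\bR)\rab = \lab\operatorname{det}(\bR_{n-1})\rab\,|r_{n,n}|$, identifies $|r_{n,n}| = \|\bv_n - \bQ_{n-1}\bQ_{n-1}^T\bv_n\|_2$ from the column expansion of $\bv_n$ in the $\bq_i$ basis, and then recognizes this as the least-squares residual. You instead work with the Gram matrix $\bM^T\bM$, apply the Schur-complement determinant identity to peel off $\bv_n$, and recognize $\bM_{n-1}(\bM_{n-1}^T\bM_{n-1})^{-1}\bM_{n-1}^T$ as the orthogonal projector $\mathcal{P}$. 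The two routes are essentially dual: the paper reads the residual length off the last diagonal entry of $\bR$, whereas you read it off the Schur complement of the Gram matrix; both then finish with Proposition \ref{pp31}. Your version has the small advantage of explicitly separating the rank-deficient case (where both sides vanish), a point the paper's QR argument handles only implicitly since $\bQ_{n-1}\bQ_{n-1}^T$ need not coincide with $\mathcal{P}$ when $\bM_{n-1}$ is rank-deficient. The paper's version sits more naturally in a QR-centric paper and links the diagonal of $\bR$ to the volume directly, but the two proofs buy the same result with comparable effort.
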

\begin{proof}
Since $V(\bM) = V(\bM \mmP)$ for any permutation matrix $\mmP$, it is sufficient to consider the case $i = n$. The QR factorization of $\bM$ can be written as 
		\be
		\bM = \bQ \bR = \bQ \begin{pmatrix}
			\bR_{n-1} & \mmr \\
			\bO & r_{n,n}
		\end{pmatrix},
		\ee
		where $r_{n,n}$ is the last diagonal element of $\bR$. We have that $V(\bM) = |\operatorname{det}(\bR)| = |\operatorname{det}(\bR_{n-1})r_{n,n}| =  V(\bM_{n-1}) |r_{n,n}|.$ Let $\bQ = \bigg( \bq_1 \ , \ \bq_2\ ,\  \cdots \ \bq_n\bigg)$ and $\bQ_{n-1}\in\R^{m\times(n-1)}$ denote the matrix composed of the first $k-1$ columns of $\bQ$, the orthogonal projector $\cP$ is given by $\bQ_{n-1}\bQ_{n-1}^T$. Since $v_n = r_{1,n}\bq_1 + r_{2,n}\bq_2 + \cdots + r_{n,n}\bq_n$ and $\bQ_{n-1}\bQ_{n-1}^Tv_n = r_{1,n}\bq_1 + r_{2,n}\bq_2 + \cdots + r_{n-1,n-1}\bq_{n-1}$, we obtain
\begin{eqnarray}
\label{eq_volume1}
|r_{n,n}| = \|r_{n,n}\bq_n\|_2 = \|\bv_n - \bQ_{n-1}\bQ_{n-1}^T\bv_n \|_2 = \min\limits_{\bx \in \bR^{n-1}} \|\bv_n - \bM_{n-1}\bx \|_2.
\end{eqnarray}
In addition, if $\bv_n \in range({\bM}_{n-1})^{\perp}$,  then $\ang \langle \bv_n , range({\bM}_{n-1}) \rangle = \frac{\pi}{2}$ we have 
\be
\|\bv_n - \bQ_{n-1}\bQ_{n-1}^T\bv_n \|_2 = \|\bv_n\|_2 sin \ang \langle \bv_n , range({\bM}_{n-1}) \rangle.
\ee
If $\bv_n \notin range({\bM}_{n-1})^{\perp}$ , using Proposition \ref{pp31}, we can also get
\be
\|\bv_n - \bQ_{n-1}\bQ_{n-1}^T\bv_n \|_2 = \| \bv_n\|_2  sin \ang \langle \bv_n , range({\bM}_{n-1}) \rangle.
\ee
\end{proof}

\begin{figure}[htbp]
    \centering
    \includegraphics[width=0.5\textwidth]{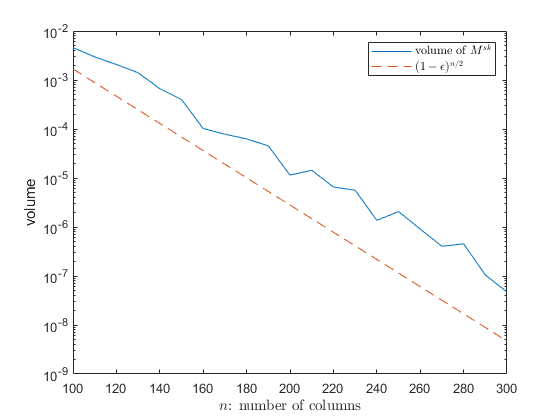}
    \caption{Volume of the sketch of an orthogonal matrix matrix obtained from sampling columns of the identity matrix.}
    \label{fig:volume_decay}
\end{figure}

We discuss now the relation between the volume of a matrix and the volume of its sketch. Let $\bM \in \bR^{m \times n}$ be a matrix formed as $\bM = (\bv_1, \ \bv_2,\  \cdots ,\ \bv_n)$, with $range(\bM) \subset \cV$, and let $\bM^{sk} \in \bM^{d \times n}$,  $\bM^{sk} = \mmO \bM = (\bv_1^{sk}, \ \bv_2^{sk},\  \cdots ,\ \bv_n^{sk})$, where $\Omega \in \bR^{d \times m}$ is an $\varepsilon$-embedding of the subspace $\cV \subset \bR^m$. Let their thin QR factorizations be $\bM = \textbf{QR}$, where $\bQ\in\Rmn,\ \bR \in \Rnn,$ and $\bM^{sk} = \textbf{Q}^{sk}\textbf{R}^{sk}$, where $\bQ^{sk}\in\Rdn,\  \bR^{sk}\in\Rnn$ respectively.  We first show that the volumes might not be preserved by sketching. In other words, the values of  $|\operatorname{det}(\textbf{R})|$ and $|\operatorname{det}(\textbf{R}^{sk})|$ can differ significantly. Indeed, using \eqref{eq_volume1}, we obtain by induction that
\begin{eqnarray}
V(\bM) &=& \prod_{i=1}^{n}\|\bv_i\|_2 \prod_{i=1}^{n-1} sin\ang \langle \bv_{i+1}, span\{\bv_1,\bv_2,\cdots,\bv_i\} \rangle, \\
V(\bM^{sk}) &=& \prod_{i=1}^{n}\|\bv_i^{sk}\|_2 \prod_{i=1}^{n-1} sin\ang \langle \bv_{i+1}^{sk}, span\{\bv_1^{sk},\ \bv_2^{sk},\cdots,\bv_i^{sk}\} \rangle.
\end{eqnarray}
In addition, we have that 
\begin{equation}
(1-\me)^{\frac{n}{2}}\prod_{i=1}^{n}\|\bv_i\|_2 \leq \prod_{i=1}^{n}\|\bv_i^{sk}\|_2 \leq (1+\me)^{\frac{n}{2}}\prod_{i=1}^{n}\|\bv_i\|_2.
\end{equation}
For sufficiently large $n$, the term $(1\pm \me)^{\frac{n}{2}}$ can have a large absolute value, and may not be sufficiently balanced by the second term $\prod_{i=1}^{n-1} sin\ang \langle \bv_{i+1}^{sk}, span\{\bv_1^{sk},\ \bv_2^{sk},\cdots,\bv_i^{sk}\}\rangle$.  An example is illustrated in Figure \ref{fig:volume_decay}. We consider an orthogonal matrix $\bW\in \R^{8192\times 500}$ obtained by sampling columns from the identity matrix of dimensions ${8192} \times 8192$, and an SRHT matrix $\mmO\in\R^{1500\times 500}$.
We further sample uniformly at random columns from $\bW$ to form a matrix $\bM$, whose volume $V(\bM)$ is $1$, and let the number of columns of $\bM$ increase from $100$ to $300$. As it can be seen in the figure, the volume of $\mmO \bM$ is much smaller than $1$ and as $n$ increases, the volume is observed to decrease as $(1 - \me)^{n/2}$, where $\me$ is $1/4$. 
 
However, when computing a strong rank-revealing factorization, we are interested in the ratio of the volumes of two matrices that differ by only a single column, as it can be seen from Algorithm \ref{alg:SRRQR_efficient}. The following lemma is the key result of this section, showing that in this particular case, the ratio of the volumes of these two matrices is preserved through sketching. It will be used in the following section to derive a randomized version of the strong rank-revealing factorization.

\begin{lemma} \label{lma2}
Consider a matrix $\bM \in \R^{m\times n}$, and let the sketching matrix ${\mmO} \in \R^{d\times m}$ be an $\me$-embedding of $range(\bM)$. Let $\bM^{sk} = \mmO \bM$, for any integer k, $1 \leq k \leq n$, let $k$-partial QR factorization of $\bM^{sk}\mmP$ and $\bM \mmP$ be $\bM^{sk} \mmP = \bQ^{sk} \bR^{sk} = \bQ^{sk} \begin{pmatrix}
			\bR_{11}^{sk}     & \bR_{12}^{sk}\\
			\bO    & \bR_{22}^{sk}
		\end{pmatrix}$, $\bM \mmP = \bQ \bR = \bQ \begin{pmatrix}
			\bR_{11}     & \bR_{12}\\
			\bO    & \bR_{22}
		\end{pmatrix}$. For all $1\leq i \leq k,\ 1\leq j\leq n-k$, let $\bar{\bR}^{sk\ (i,j)} = \cR_k(\bR^{sk} \mmP_{i,j+k}) = \begin{pmatrix}
			\bar{\bR}_{11}^{sk\ (i,j)}      & \bar{\bR}_{12}^{sk\ (i,j)} \\
			\bO    & \bar{\bR}_{22}^{sk\ (i,j)} 
		\end{pmatrix}$ and $\bar{\bR} = \cR_k(\bR \mmP_{i,j+k}) = \begin{pmatrix}
			\bar{\bR}_{11}^{(i,j)}     & \bar{\bR}_{12}^{(i,j)}\\
			\bO    & \bar{\bR}_{22}^{(i,j)}
		\end{pmatrix}$. The following holds:
		\be \label{ieq_ratio}
		\sqrt{\frac{1-\me}{1+\me}} \left|\frac{\operatorname{det}(\bar{\bR}_{11}^{sk\  (i,j)})}{\operatorname{det}(\bR_{11}^{sk})}\right| \leq \lab \frac{\operatorname{det}(\bar{\bR}_{11}^{i,j})}{\operatorname{det}(\bR_{11})} \rab \leq  \sqrt{\frac{1+\me}{1-\me}}\lab\frac{\operatorname{det}(\bar{\bR}_{11}^{sk\ (i,j)})}{\operatorname{det}(\bR_{11}^{sk})}\rab,\ \forall \ijk.
		\ee
	\end{lemma}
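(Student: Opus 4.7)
The plan is to rewrite each determinant ratio as the ratio of two least-squares residuals that share a common design matrix, and then apply the sketched least-squares estimate of Lemma \ref{lma1} separately to the numerator and denominator.

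First, I would identify the matrices whose volumes equal $|\operatorname{det}(\bR_{11})|$ and $|\operatorname{det}(\bar{\bR}_{11}^{(i,j)})|$. Let $\bA \in \R^{m \times (k-1)}$ denote the matrix formed by the first $k$ columns of $\bM\mmP$ with the $i$-th column removed, and let $\bb_1, \bb_2$ be the $i$-th and the $(j+k)$-th columns of $\bM\mmP$, respectively. Then the first $k$ columns of $\bM\mmP$ are a column-permutation of $[\bA,\bb_1]$, while the first $k$ columns of $\bM\mmP\mmP_{i,j+k}$ are a column-permutation of $[\bA,\bb_2]$. Since column permutations change only the sign of the $\bR$-factor determinant, I obtain $|\operatorname{det}(\bR_{11})| = V([\bA,\bb_1])$ and $|\operatorname{det}(\bar{\bR}_{11}^{(i,j)})| = V([\bA,\bb_2])$, with the analogous identities on the sketched side obtained by replacing $\bA$ and $\bb_\ell$ by $\mmO\bA$ and $\mmO\bb_\ell$.

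Next, applying Lemma \ref{lma_volume} with the last column removed gives $V([\bA,\bb_\ell]) = V(\bA)\,\min_{\bx \in \R^{k-1}}\|\bA\bx - \bb_\ell\|_2$ for $\ell = 1,2$, and similarly for the sketched versions. Taking ratios, the common volume factors $V(\bA)$ and $V(\mmO\bA)$ cancel, so that
\begin{equation*}
\left|\frac{\operatorname{det}(\bar{\bR}_{11}^{(i,j)})}{\operatorname{det}(\bR_{11})}\right| = \frac{\min_{\bx}\|\bA\bx - \bb_2\|_2}{\min_{\bx}\|\bA\bx - \bb_1\|_2}, \qquad \left|\frac{\operatorname{det}(\bar{\bR}_{11}^{sk\,(i,j)})}{\operatorname{det}(\bR_{11}^{sk})}\right| = \frac{\min_{\bx}\|\mmO(\bA\bx - \bb_2)\|_2}{\min_{\bx}\|\mmO(\bA\bx - \bb_1)\|_2}.
\end{equation*}

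Finally, since $range([\bA,\bb_1])$ and $range([\bA,\bb_2])$ are subspaces of $range(\bM)$, the matrix $\mmO$ is automatically an $\me$-embedding of each, so Lemma \ref{lma1} yields
\begin{equation*}
\sqrt{1-\me}\,\min_{\bx}\|\bA\bx - \bb_\ell\|_2 \leq \min_{\bx}\|\mmO(\bA\bx - \bb_\ell)\|_2 \leq \sqrt{1+\me}\,\min_{\bx}\|\bA\bx - \bb_\ell\|_2
\end{equation*}
for $\ell = 1,2$. Combining the upper bound on the sketched numerator with the lower bound on the sketched denominator gives the upper bound in \eqref{ieq_ratio}, and the reverse combination gives the lower bound. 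The only conceptual step is the volume interpretation that converts the determinant ratio into a residual ratio with a shared design matrix; once this identification is made, Lemma \ref{lma1} does all the work, and nondegeneracy of the sketched $\bR_{11}^{sk}$ (whenever the original $\bR_{11}$ is invertible) is guaranteed by Theorem \ref{th_sk_svd}.
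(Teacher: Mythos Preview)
Your proposal is correct and follows essentially the same route as the paper: both arguments identify the determinant ratio with a ratio of least-squares residuals via Lemma~\ref{lma_volume} (your $\bA$, $\bb_1$, $\bb_2$ are the paper's $\bM_{k-1}$, $\bv_i$, $\bv_{j+k}$), then invoke Lemma~\ref{lma1} on numerator and denominator. Your write-up is in fact slightly more explicit about the cancellation of $V(\bA)$ and the nondegeneracy check via Theorem~\ref{th_sk_svd}, but the underlying idea is identical.
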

\begin{proof}
Let $\bM\mmP = \bigg(\bv_1, \ \bv_2,\ \cdots, \ \bv_n\ \bigg)$ and $\bM^{sk}\mmP = \bigg(\bv_1^{sk}, \ \bv_2^{sk},\  \cdots,\ \bv_n^{sk}\bigg)$, where $\bv_i^{sk} = \mmO \bv_i, \ \forall 1\leq i \leq n$. Let $\bM_k = \bigg(\bv_1, \ \bv_2,\  \cdots, \ \bv_k\ \bigg)$,  $\bar{\bM}_k = \bigg(\bv_1, \ \bv_2,\  \cdots, \ \bv_{i-1},\ \bv_{j+k},\ \bv_{i+1},\ \cdots,\ \bv_k\bigg)$ and $\bM_{k-1} = \bigg(\bv_1, \ \bv_2,\  \cdots, \ \bv_{i-1},\ \bv_{i+1},\ \cdots,\ \bv_k\bigg).$ 
  
Using Lemma \ref{lma_volume}, we have
\be
\lab \frac{\operatorname{det}(\bar{\bR}_{11}^{i,j})}{\operatorname{det}(\bR_{11})} \rab = \frac{V(\bar{\bM}_k)}{V(\bM_k)}
	=\frac{\min\limits_{x\in \R^{k-1}} \| \bM_{k-1}\bx - \bv_{j+k}\|_2}{\min\limits_{x\in \R^{k-1}} \| \bM_{k-1}\bx - \bv_{i}\|_2}.
\ee
Let $\bM^{sk}_k = \mmO\bM_k,\ \bar{\bM}^{sk}_k = \mmO\bar{\bM}_k$ and $\bM^{sk}_{k-1} = \mmO \bM_{k-1}$. Using Lemma \ref{lma_volume} again, we obtain 
\be
\lab \frac{\operatorname{det}(\bar{\bR}^{sk (i,j)}_{11})}{\operatorname{det}(\bR^{sk}_{11})} \rab = \frac{V(\bar{\bM}^{sk}_k)}{V(\bM^{sk}_k)} =\frac{\min\limits_{x\in \R^{k-1}} \| \bM^{sk}_{k-1}\bx - \bv^{sk}_{j+k}\|_2}{\min\limits_{x\in \R^{k-1}} \| \bM^{sk}_{k-1}\bx - \bv^{sk}_{i}\|_2}= \frac{\min\limits_{x\in \R^{k-1}} \| \mmO (\bM_{k-1}\bx - \bv_{k+j})\|_2}{\min\limits_{x\in \R^{k-1}} \| \mmO (\bM_{k-1}\bx - \bv_{i} )\|_2}.
\ee

The proof is finished using Lemma \ref{lma1}. 
\end{proof}

\section{Randomized strong rank-revealing QR}
\label{section_rrqr}
In this section, we introduce a randomized strong rank-revealing QR factorization and show that it provides rank-revealing guarantees comparable to those of the deterministic algorithm. We consider two scenarios, when the rank $k$ is specified and when a fixed tolerance $\tau$ is given. In the latter case, the factorization stops when the $\ell_2$-norm of the columns of $\bR_{22}$ is smaller than $\sqrt{\frac{1+\me}{1-\me}} \tau$.  We first consider the case when the dimension of the range of $\bM$, with $\bM \in \bR^{m \times n}$, is much smaller than $m$, as for example in the case of tall and skinny matrices or low-rank matrices, which arise often in practice \cite{big_data_low_rank}. Then we extend our analysis to general matrices for the case when $k$ is given and $k \ll m$.
  
\subsection{Algorithms}

We first consider the case when the dimension of the range of $\bM$ is much smaller than $m$. Given $k$, the goal is to select $k$ columns from $\bM$ which are then used to compute a partial pivoted QR factorization that satisfies the strong rank-revealing properties stated in \eqref{def_srrqr}.  The main idea of the randomized approach is to select the $k$ columns by performing a deterministic strong RRQR on its sketch $\bM^{sk} = \mmO \bM$.  We will show later that this selection ensures the preservation of strong rank-revealing properties, up to a factor of $\sqrt{\frac{1+\varepsilon}{1-\varepsilon}}$.  

\begin{algorithm}[htbp]
\caption{Randomized SRRQR (randSRRQR-$r$) for a given rank $k$}\label{alg:RSRRQR}
\begin{algorithmic}[1]
\Require $\bM$ is $m \times n$ matrix, parameter $f>1$ for strong RRQR, $d$ is sampling size  such that $d \ll m$ and $\mmO \in \bR^{d \times m}$ is $(\me,\delta,r)$-OSE, where $\operatorname{dim}(range(M))\leq r$,  $k \leq \min(n,d)$ is target rank.
\Ensure $\bQ$ is $m \times m$ orthogonal matrix, $\bR$ is $m \times n$ upper trapezoidal matrix, $\mmP$ is $n \times n$ permutation matrix such that $\bM\mmP = \bQ \bR$.
\Function{$[\bQ,\bR,\mmP] =$ randSRRQR-$r$}{$\bM, f, k , d$}
\State Generate a $d \times m$ sketching matrix $\mmO$
\State Compute $\bM^{sk} \coloneqq \mmO \bM$
\State Get permutation matrix $\mmP$ by performing SRRQR on $\bM^{sk}$ with given $k$ and $f$,
\Statex \hspace{1cm}
  [$\sim, \ \sim,\ \sim,\ \mmP]$ = SRRQR($\bM^{sk},f,k$) 
\State Do partial QR factorization on $\bM\mmP = \bQ \bR = \bQ \begin{pmatrix}
    \bR_{11} & \bR_{12} \\
    0        & \bR_{22}
\end{pmatrix}$ by a stable method
\State \Return $\bQ, \bR, \mmP$
\EndFunction
		\end{algorithmic}
	\end{algorithm}

The randomized strong RRQR factorization for a given rank $k$ is presented in Algorithm \ref{alg:RSRRQR}. Since we have no prior knowledge on $\bM$, we first generate an $(\me,\md,r)$-OSE sketching matrix $\mmO \in \bR^{d \times m}$ that satisfies the $\varepsilon$-embedding property for the range of $\bM$ with high probability, where $r$ is the dimension of the range of $\bM$.  As discussed in Section \ref{subsec:randomsketch}, $\mmO$ can be, for example, a rescaled Gaussian matrix or a subsampled randomized Hadamard transform. The sketch of the matrix is computed as $\bM^{sk} = \mmO \bM$.  A deterministic SRRQR is then performed on $\bM^{sk}$ that allows selecting $k$ columns and providing strong rank-revealing guarantees as in \eqref{def_srrqr} for the factorization of $\bM^{sk}$.  It returns a corresponding permutation matrix $\mmP$ that permutes the selected columns in leading positions. A partial QR factorization on the permuted matrix $\bM\mmP$ is finally performed. Efficient and numerically stable methods can be applied for this step, for instance TSQR \cite{cotsqrlu} or randomized Householder QR \cite{randhsqr}, which leverage parallelization and randomization, respectively.  One could also use $\bR_{11}^{sk}$ as a preconditioner for CholeskyQR, as e.g. explained in \cite{murray2023randomizednumericallinearalgebra}.

We note that the strong RRQR factorization of $\bM_{sk}$ is performed by calling Algorithm \ref{alg:SRRQR_efficient} that ensures that the strong rank-revealing property is progressively satisfied as $k$ increases. However we note that, as detailed in \cite{rqrcp}, the strong RRQR factorization can also be computed by first computing a QRCP factorization up to step $k$ and then performing additional permutations to satisfy the strong rank revealing property in \eqref{def_srrqr}. Both versions of the algorithms can be used in our work, and in particular the strong rank-revealing properties that we prove in Section \ref{subsec_randstrongproperties} hold.

The algorithm can be further extended to also determine an appropriate target rank $k$. One approach involves computing the singular values of $\bM^{sk}$, denoted as $\ms_i(\bM^{sk})$, and analyzing their decay. By Theorem \ref{th_sk_svd}, these singular values approximate well those of $\bM$, allowing us to identify a gap in the spectrum.

Algorithm \ref{alg:RSRRQR_k} presents the randomized strong RRQR when the rank $k$ is unknown. In this case the goal is to provide a factorization that ensures that the norm of each column of $\bR_{22}$ is bounded by a tolerance.  The algorithm is similar to Algorithm \ref{alg:RSRRQR}. However, in this case, given a tolerance $\tau$, the deterministic strong RRQR Algorithm \ref{alg:SRRQR_efficient} is called with the input tolerance $\tau$ and thus returns a parameter $k$ and a selection of columns $k$ such that the $\ell_2$-norm of each column of $\bR_{22}^{sk}$ is smaller than $\tau$. We will show in Theorem \ref{th_trailingmatrix} that this ensures that the $\ell_2$-norm of each column of $\bR_{22}$ is bounded by $\frac{\tau}{ \sqrt{1 - \me}}$.  This allows us to bound $\|\bR_{22}\|_F$. 

\begin{algorithm}[htbp]
\caption{Randomized SRRQR (randSRRQR-$\tau$) for a tolerance $\tau$}\label{alg:RSRRQR_k}
\begin{algorithmic}[1]
\Require 
		$\bM$ is $m \times n$ matrix, parameter $f>1$ for strong RRQR, $d$ is sampling size such that $d \ll m$ and $\mmO \in \bR^{d \times m}$ is $(\me,\delta,r)$-OSE, where $\operatorname{dim}(range(M))\leq r$, $\tau$ is tolerance parameter such that $\max\limits_{j} \mr_j (\bR_{22})\leq \frac{\tau}{ \sqrt{1 - \me}}$.
  \Ensure
        $\bQ$ is $m \times m$ orthogonal matrix, $\bR$ is $m \times n$ upper trapezoidal matrix, $\mmP$ is $n \times n$ permutation matrix such that $\bM\mmP = \bQ \bR$, $k$ is obtained rank.
\Function{$[k, \bQ,\bR,\mmP] =$ RandSRRQR-$\tau$}{$\bM, f, \tau ,d$}
\State Generate a $d \times m$ sketching matrix $\mmO$
\State Compute $\bM^{sk} \coloneqq \mmO \bM$
\State Get permutation matrix $\mmP$ and $k$ by performing SRRQR on $\bM^{sk}$ with given $f$ and $\tau$,
\Statex \hspace{1cm}
  [$k,\ \sim,\ \sim,\ \mmP]$ = SRRQR($\bM^{sk},f,\tau$) 
\State Do partial QR factorization on $\bM\mmP = \bQ \bR = \bQ \begin{pmatrix}
    \bR_{11} & \bR_{12} \\
    0        & \bR_{22}
\end{pmatrix}$ by a stable method
\Statex \quad \quad ($\bR_{11}$ is a $k \times k$ submatrix)
\State \Return $k, \bQ, \bR, \mmP$
\EndFunction
		\end{algorithmic}
	\end{algorithm}

For both algorithms, since $\bM^{sk}$ is a matrix of much smaller dimensions, computing its strong RRQR factorization is significantly more efficient than computing the strong RRQR factorization of the matrix $\bM$. In terms of computational cost, it is shown in \cite{srrqr} that the deterministic strong RRQR Algorithm \ref{alg:SRRQR_efficient} requires at most $2mk(2n-k) + 4t_fn(m+n)$ flops, where $t_f$ is the number of column interchanges needed to ensure that $\rho(\bR,k) \leq f$ and thus the strong rank-revealing properties are satisfied. It is also shown that $t_f$ is bounded by $k\log_f\sqrt{n}$. In our tests, in general $t_f$ is much smaller than this upper bound. In the randomized algorithms, since $M^{sk}$ is $d\times n$, the cost of this step is $2dk(2n-k) + 4t_fn(d+n)$ flops. The cost of sketching is $O(mnlog(m))$ flops. The final QR factorization without pivoting costs $4mnk$. The total cost of Algorithms \ref{alg:RSRRQR} and \ref{alg:RSRRQR_k} is thus at most $O(mnlog(m)) + 2dk(2n-k) + 4t_fn(d+n) + 4mnk$ flops. More importantly, if the algorithm is implemented on a parallel machine, the communication among processors is drastically reduced since the strong RRQR factorization of $\bM^{sk}$ can be performed on one processor and the QR factorization without permutations of $\bM$ can be computed using TSQR \cite{cotsqrlu} for example.  

We now discuss the case of a general matrix $\bM \in \R^{m \times n}$ $(m \geq n)$ for which we want to compute a strong RRQR factorization for a given $k$, where $k \ll m$. The randomized strong RRQR factorization presented in Algorithms \ref{alg:RSRRQR} and \ref{alg:RSRRQR_k} relies on a random sketching matrix that is an $(\me, \delta, r)$-OSE, where $r$ is the dimension of the range of $\bM$ and thus embeds the subspace $range(\bM)$ with high probability. For a full rank matrix, the embedding dimension $d$ is larger than $n$. 
Hence, when $n$ is close to or equal to $m$, this embedding dimension can become close to, or even larger than $m$, making dimension reduction ineffective.  Thus, this randomized approach cannot be applied to compute a complete factorization, it can only be used to compute a partial factorization of dimension $k$, for $k \ll m$.  Such a partial factorization has many applications. It can be used to select $k$ columns of a matrix or compute its rank-k approximation. 
In this case, we use an $(\me, \delta, k+1)$-OSE $\mmO$ to embed any $k+1$ dimensional subspace with high probability.
The modified version of Algorithm \ref{alg:RSRRQR} is presented as Algorithm \ref{alg:RSRRQR_square}. The difference is in line 2, where we generate a $d \times m$ random sketching matrix $\mmO$, which is an $(\me,\delta,k+1)$-OSE, instead of an $(\me,\delta,r)$-OSE. 

\begin{algorithm}[htbp]
\caption{Randomized SRRQR(randSRRQR-$r$) for a target rank $k$}\label{alg:RSRRQR_square}
\begin{algorithmic}[1]
\Require $\bM \in \bR^{m \times n}$, target rank $k \ll m$, parameter $f>1$, sampling size $d$ such that $\mmO$ is $(\me,\delta,k+1)$-OSE.
\Ensure $\bQ$ is $m \times m$ orthogonal matrix, $\bR$ is $m \times n$ upper trapezoidal matrix, $\mmP$ is $n \times n$ permutation matrix such that $\bM\mmP = \bQ \bR$.
\Function{$[\bQ,\bR,\mmP] =$ RandSRRQR-$r$}{$\bM, f, k, d$}
\State Generate an $(\me,\delta,k+1)$-OSE matrix $\mmO \in \bR^{d \times m}$
\State Compute $\bM^{sk} = \mmO \bM$
\State Get permutation matrix $\mmP$ by performing SRRQR on $\bM^{sk}$ with given $k$ and $f$,
\Statex \hspace{1cm}
  [$\sim,\ \sim,\ \sim,\ \mmP]$ = SRRQR($\bM^{sk},f,k$) 
\State Compute partial QR factorization of $\bM\mmP = \bQ \bR$ by a stable method
\State \Return $\bQ, \bR, \mmP$
\EndFunction
\end{algorithmic}
	\end{algorithm}

\subsection{Strong rank-revealing properties}\label{srrqr_pp}
\label{subsec_randstrongproperties}
We present now our main theoretical results. Specifically, we demonstrate that the randomized strong rank-revealing factorizations presented in Algorithms \ref{alg:RSRRQR} and \ref{alg:RSRRQR_k} nearly preserve the strong rank-revealing properties from \eqref{def_srrqr}. This shows that the permutation matrix $\mmP$ obtained from the strong RRQR of $\bM^{sk}$ provides an effective column selection for $\bM$. To prove this result we use Lemma \ref{lma2} from the previous section showing that the ratio of the volumes of two matrices that differ by only one column is nearly preserved by sketching.  Furthermore, the tolerance parameter $\tau$ in Algorithm \ref{alg:RSRRQR_k} allows us to control the $L_2$-norms of the columns of both $\bR_{22}$ and $\bR_{22}^{sk}$. One of our key findings, introduced below, shows how the critical quantity $\rho(\bR,k)$ in Algorithm \ref{alg:SRRQR_efficient}, defined in \eqref{def_rho}, is influenced by the sketching matrix. 

\begin{theorem} \label{th_rsrrqr1}
Consider $\bM \in \bR^{m \times n}$ and assume that $\mmO\in\Rdm$ is an $\me$-embedding of $range(\bM)$. Let $\bM^{sk} = \mmO\bM$ and let $\bM^{sk} \mmP = \bQ^{sk} \bR^{sk}$ be the strong RRQR factorization obtained for a given $k$ and $f$ (or for a given $\tau$ and $f$).  Consider the partial QR factorization of $\bM\  \mmP$ and let $\mathcal{R}_k(\bM\  \mmP) = \begin{pmatrix}
\bR_{11}      & \bR_{12}\\
\bO    & \bR_{22}
\end{pmatrix}$, where $\bR_{11} \in \bR^{k \times k}$. We have 
		\be
         1 \leq \frac{\ms_i(\bM)}{\ms_i(\bR_{11})}, \frac{\ms_j(\bR_{22})}{\ms_j(\bM)} \leq \sqrt{1+\tilde{f}^2k(n-k)}, \ \lab (\bR_{11}^{-1}\bR_{12})_{i,j} \rab \leq \tilde{f},
		\ee
	for all $\ 1\leq i \leq k,\ 1\leq j\leq n-k$, where $\tilde{f} = \sqrt{\frac{1+\me}{1-\me}}f$. 
	\end{theorem}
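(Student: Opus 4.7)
The plan is to reduce the statement to an application of the deterministic Theorem \ref{th_srrqr1} applied to the QR factorization of $\bM\mmP$ with the stronger constant $\tilde{f}$. Recall that $\rho(\bR,k)$ satisfies, for every pair $(i,j)$,
\[
\sqrt{(\bR_{11}^{-1}\bR_{12})_{i,j}^2 + \mw_i^2(\bR_{11})\mr_j^2(\bR_{22})} = \left|\frac{\operatorname{det}(\bar{\bR}_{11}^{(i,j)})}{\operatorname{det}(\bR_{11})}\right|,
\]
so the central task is to bound the maximum of these determinant ratios for the QR factor of $\bM\mmP$ by $\tilde{f}$. Once this is shown, Theorem \ref{th_srrqr1} delivers the claimed singular value bounds, and the inequality $|(\bR_{11}^{-1}\bR_{12})_{i,j}| \leq \tilde{f}$ follows immediately since each summand under the square root is nonnegative.

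To obtain the bound on the sketched side, I would first observe that in both invocations of Algorithm \ref{alg:SRRQR_efficient} (the version with target rank $k$ and the version with tolerance $\tau$), the inner while loop is only exited once $\hat{\rho}(\bR^{sk},k) \leq f/\sqrt{2}$, which by the elementary inequality $\rho \leq \sqrt{2}\,\hat{\rho}$ yields
\[
\max_{1\leq i \leq k,\ 1\leq j\leq n-k}\left|\frac{\operatorname{det}(\bar{\bR}_{11}^{sk\,(i,j)})}{\operatorname{det}(\bR_{11}^{sk})}\right| = \rho(\bR^{sk},k) \leq f.
\]
This is the only property of the deterministic strong RRQR on the sketch that I will need; notably it is the same in the rank-specified and tolerance-specified versions, so the two scenarios are treated uniformly.

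Next I would invoke Lemma \ref{lma2} with the permutation $\mmP$ produced in the sketched step. For every fixed $(i,j)$, the lemma transfers the determinant ratio from the sketch back to the original matrix at a cost of at most a factor $\sqrt{(1+\me)/(1-\me)}$; this is applicable because $\mmO$ is an $\me$-embedding of $\operatorname{range}(\bM)$, which contains the range of every submatrix of $\bM$ involved in the ratios. Taking the maximum over $(i,j)$ and combining with the previous bound gives
\[
\rho(\bR,k) \leq \sqrt{\tfrac{1+\me}{1-\me}}\,\rho(\bR^{sk},k) \leq \sqrt{\tfrac{1+\me}{1-\me}}\,f = \tilde{f}.
\]

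Finally, applying Theorem \ref{th_srrqr1} to the partial QR factorization of $\bM\mmP$ with parameter $\tilde{f}$ yields the two singular value bounds with $q(k,n) = \sqrt{1+\tilde{f}^2 k(n-k)}$, and the entrywise bound on $\bR_{11}^{-1}\bR_{12}$ follows as noted above. I do not foresee a serious obstacle here, since the structural lemmas of Sections \ref{section_pre} and \ref{section_sketch_geo} have already done the heavy lifting; the only subtlety worth emphasizing is that Lemma \ref{lma2} must be applied per index pair before taking the maximum, and that for the tolerance-driven variant the final rank $k$ returned by the sketched SRRQR is inherited automatically, so the bound on $\rho(\bR,k)$ still passes through to Theorem \ref{th_srrqr1}.
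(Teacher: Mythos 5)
Your proof is correct and follows essentially the same route as the paper: both arguments use the fact that the deterministic SRRQR on $\bM^{sk}$ guarantees $\rho(\bR^{sk},k)\leq f$, invoke Lemma \ref{lma2} per index pair to transfer the determinant-ratio bound to $\bM$ at a cost of $\sqrt{(1+\me)/(1-\me)}$, obtaining $\rho(\bR,k)\leq\tilde f$, and then conclude by applying Theorem \ref{th_srrqr1} with parameter $\tilde f$. The only cosmetic difference is that you pass explicitly through the stopping criterion $\hat\rho(\bR^{sk},k)\leq f/\sqrt2$ and the inequality $\rho\leq\sqrt2\,\hat\rho$, whereas the paper states the resulting $\rho(\bR^{sk},k)\leq f$ guarantee directly.
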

	\begin{proof}
The lower bound is given by the interlacing property of singular values \cite{golub2013matrix}. The strong RRQR factorization of $\bM^{sk}$ interchanges columns until there is no pair $(i,j)$ such that $\lab \operatorname{det}(\bar{\bR}_{11}^{sk (i,j)})/\operatorname{det}(\bR_{11}^{sk}) \rab \geq f$. Hence, it is guaranteed that
\be
\forall 1\leq i \leq k,\ 1\leq j\leq n-k, \ \lab \operatorname{det}(\bar{\bR}_{11}^{sk (i,j)})/\operatorname{det}(\bR_{11}^{sk}) \rab < f.
\ee
Since $\mmO$ is an $\me$-embedding of $range(\bM)$, using Lemma \ref{lma2}, we have
        \be
        \label{ieq_alldet}
		\forall 1\leq i \leq k,\ 1\leq j\leq n-k, \ \lab \operatorname{det}(\bar{\bR}_{11}^{(i,j)})/\operatorname{det}(\bR_{11}) \rab < \sqrt{\frac{1+\me}{1-\me}} f.
		\ee
We obtain that $\rho(\bR,k) = \max\limits_{\ijk} \lab \operatorname{det}(\bar{\bR}_{11}^{(i,j)})/\operatorname{det}(\bR_{11}) \rab < \sqrt{\frac{1+\me}{1-\me}}f = \tilde{f}$.  Using Theorem \ref{th_srrqr1}, we conclude that
		$$
		1 \leq \frac{\ms_i(\bM)}{\ms_i(\bR_{11})}, \frac{\ms_j(\bR_{22})}{\ms_j(\bM)} \leq \sqrt{1+\tilde{f}^2k(n-k)}, \ \lab (\bR_{11}^{-1}\bR_{12})_{i,j} \rab \leq \tilde{f}.
		$$
	\end{proof}

In this proof, \eqref{ieq_alldet} is essentially derived from Lemma \ref{lma1}. If $\mmO$ is an $\me$-embedding of $range(\bM)$, then $\mmO$ is an $\me$-embedding of $span\{\bv_1,\ \bv_2,\ \bv_3,\cdots,\bv_k\}$ for any columns $\bv_1,\ \bv_2,\ \bv_3,\cdots,\bv_k$ of $\bM$. Hence the residuals of all least square problems of the form $\min\limits_{\bx \in \R^{k-1}}\|\bM_{k-1} \bx - \bv_i\|_2$ can be nearly preserved, where $\bM_{k-1} = \bigg(\bv_1, \ \bv_2,\  \cdots, \ \bv_{i-1},\ \bv_{i+1},\ \cdots,\ \bv_k\bigg)$. 
 
The result of Theorem \ref{th_rsrrqr1} holds under two conditions: $\mmO$ is an $\me$-embedding of $range(\bM)$ and $\rho(\bR^{sk},k) \leq f$. Algorithm \ref{alg:RSRRQR} starts by generating an $(\me,\md,r)$-OSE $\mmO$ so that $\mmO$ is an $\me$-embedding of $range(\bM)$ with probability at least $1 - \md$, provided that the dimension of the range of $\bM$ is smaller than $r$. Therefore, the factorization computed by Algorithm \ref{alg:RSRRQR} satisfies the strong rank-revealing property in Theorem \ref{th_rsrrqr1} with probability at least $1 - \md$.  Typically, $\me$ is taken to be $\frac{1}{4}$ and $f$ is set as $2$, then $\tilde{f} = \sqrt{\frac{1+\me}{1-\me}}f$ is equal to $2.58$.  Hence the guarantees of the randomized strong RRQR are very similar to those of the deterministic strong RRQR. Another result that can be derived from Lemma \ref{lma2} is that the Frobenius norm of $\bR_{22}$ can be as small as that of $\bR^{sk}_{22}$. We prove first a result that bounds the $\ell_2$-norms of the columns of $\bR_{22}$. 
\begin{theorem}\label{th_trailingmatrix}
Consider $\bM \in \bR^{m \times n}$ and assume that $\mmO\in\Rdm$ is an $\me$-embedding of $range(\bM)$.  Let $\bM^{sk} = \mmO\bM$ and for any permutation matrix $\mmP$, we consider the partial QR factorization of $\bM\  \mmP$ and $\bM^{sk}\  \mmP$: $\cR_k(\bM \mmP) = \begin{pmatrix}
			\bR_{11}       & \bR_{12}\\
			\bO    & \bR_{22}
   \end{pmatrix}$ and $\cR_k(\bM^{sk} \mmP) = \begin{pmatrix}
			\bR^{sk}_{11}       & \bR^{sk}_{12}\\
			\bO    & \bR^{sk}_{22}
   \end{pmatrix}$, where $\bR_{11},\bR_{11}^{sk}\in \R^{k\times k}$.  We have 
   \be
   \frac{1}{1 + \me} \mr_j^2(\bR_{22}^{sk})  \leq \mr_j^2(\bR_{22})  \leq \frac{1}{1 - \me}\mr_j^2(\bR_{22}^{sk}),\ \forall 1\leq j\leq n - k.
   \ee
 \end{theorem}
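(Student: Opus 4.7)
The plan is to identify $\mr_j(\bR_{22})$ with the residual of a least squares problem involving columns of $\bM\mmP$, and then invoke Lemma \ref{lma1} on the sketched least squares problem.

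First, I would write $\bM\mmP = (\bv_1, \bv_2, \ldots, \bv_n)$ and let $\bM_k = (\bv_1, \ldots, \bv_k)$ denote the matrix of the first $k$ columns. Since $\bM\mmP = \bQ \bR$ and the $(j+k)$-th column of $\bR$ has the form $\left(\bR_{12}(:,j)^T,\ \bR_{22}(:,j)^T,\ 0,\ldots,0\right)^T$, the squared norm of the $\bR_{22}$ portion is exactly the squared distance from $\bv_{j+k}$ to $range(\bM_k)$. Equivalently,
\[
\mr_j(\bR_{22}) = \min_{\bx \in \R^k} \|\bM_k \bx - \bv_{j+k}\|_2.
\]
The same reasoning applied to $\bM^{sk}\mmP = \mmO(\bv_1, \ldots, \bv_n)$ gives
\[
\mr_j(\bR_{22}^{sk}) = \min_{\bx \in \R^k} \|\mmO(\bM_k \bx - \bv_{j+k})\|_2.
\]

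Next, I would verify that Lemma \ref{lma1} applies. Since $\mmO$ is an $\me$-embedding of $range(\bM)$ and $range([\bM_k, \bv_{j+k}]) \subset range(\bM)$, the $\me$-embedding property transfers to this smaller subspace. Applying Lemma \ref{lma1} with $\bA = \bM_k$ and $\bb = \bv_{j+k}$ then yields
\[
\frac{1}{\sqrt{1+\me}}\,\mr_j(\bR_{22}^{sk}) \;\leq\; \mr_j(\bR_{22}) \;\leq\; \frac{1}{\sqrt{1-\me}}\,\mr_j(\bR_{22}^{sk}).
\]
Squaring both sides gives the claimed bound.

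There is no real obstacle here; the only subtlety is making explicit the identification of the column norm $\mr_j(\bR_{22})$ with a least squares residual, which is a direct consequence of the partial QR factorization and the orthogonality of $\bQ$. Since $\mmP$ is the same permutation in both factorizations and $\mmO$ restricts to an $\me$-embedding of every subspace of $range(\bM)$, the result follows immediately from Lemma \ref{lma1} applied column by column.
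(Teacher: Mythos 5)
Your proof is correct and takes essentially the same approach as the paper: both identify $\mr_j(\bR_{22})$ with the residual $\min_{\bx}\|\bM_k\bx - \bv_{j+k}\|_2$ of a least squares problem (and similarly for the sketch) and then apply Lemma \ref{lma1}. Your write-up makes slightly more explicit why the identification holds and why the $\me$-embedding property transfers to the subspace $range([\bM_k,\bv_{j+k}])$, but the argument is the same.
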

 \begin{proof}
     Let $\bM\mmP = \bigg(\bv_1, \ \bv_2\ ,\  \cdots \ , \bv_n\  \bigg)$ and let $\bM_k = \bigg(\bv_1 \ , \ \bv_2\ ,\  \cdots , \ \bv_k\ \bigg)$ be the matrix formed by the first $k$ columns of $\bM\mmP$. We prove the upper bound, and the lower bound can be obtained by a similar reasoning. Using Lemma \ref{lma1}, for all $1\leq j\leq n - k$, we have
     \[
    \begin{aligned}
        \mr_j^2(\bR_{22}) & = \min\limits_{x \in \R^k} \|\bM_k \bx - v_{j+k}\|_2^2 \\
        &\leq \frac{1}{1 - \me}  \min\limits_{x \in \R^k} \|\mmO(\bM_k \bx - v_{j+k})\|_2^2\\
        &= \frac{1}{1 - \me}\mr_j^2(\bR_{22}^{sk}).
    \end{aligned}
     \]
 \end{proof}
 \begin{corollary}
     Under the assumptions of Theorem \ref{th_trailingmatrix}, we have 
     \be
   \frac{1}{1 + \me}\|\bR^{sk}_{22}\|^2_F \leq \|\bR_{22}\|^2_F \leq \frac{1}{1 - \me}\|\bR^{sk}_{22}\|^2_F.
   \ee
 \end{corollary}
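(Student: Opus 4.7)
The plan is to derive this corollary as an immediate consequence of Theorem \ref{th_trailingmatrix} by summing the column-wise bounds. Recall that the squared Frobenius norm of a matrix equals the sum of the squared $\ell_2$-norms of its columns, so
\[
\|\bR_{22}\|_F^2 = \sum_{j=1}^{n-k} \mr_j^2(\bR_{22}), \qquad \|\bR_{22}^{sk}\|_F^2 = \sum_{j=1}^{n-k} \mr_j^2(\bR_{22}^{sk}).
\]

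First, I would invoke Theorem \ref{th_trailingmatrix}, which gives, for each $1 \le j \le n-k$, the two-sided bound
\[
\frac{1}{1+\me}\mr_j^2(\bR_{22}^{sk}) \;\leq\; \mr_j^2(\bR_{22}) \;\leq\; \frac{1}{1-\me}\mr_j^2(\bR_{22}^{sk}).
\]
Then I would sum these inequalities over $j = 1, \dots, n-k$. Since the constants $\frac{1}{1+\me}$ and $\frac{1}{1-\me}$ do not depend on $j$, they factor out of the sum, yielding the claimed inequalities on the Frobenius norms.

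There is no real obstacle here; the result is a direct consequence of linearity of the sum combined with the column-wise estimates already established. The only ingredients are the definition of the Frobenius norm in terms of column norms and the statement of Theorem \ref{th_trailingmatrix}, both of which are available.
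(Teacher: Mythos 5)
Your proposal is correct and follows exactly the same route as the paper: expand the squared Frobenius norm as the sum of squared column norms, apply the column-wise two-sided bound from Theorem \ref{th_trailingmatrix}, and sum over $j$. Nothing to add.
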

 \begin{proof}
 Using Theorem \ref{th_trailingmatrix}, we have
 \[
      \|\bR_{22}\|_F^2 = \sum_{j = 1}^{n - k} \mr_j^2(\bR_{22})
      \leq \frac{1}{1 - \me}\sum_{j = 1}^{n - k}  \mr_j^2(\bR_{22}^{sk})
      = \frac{1}{1 - \me} \|\bR_{22}^{sk}\|_F^2.
 \]
 The bound $\|\bR_{22}\|_F^2 \geq \frac{1}{1 + \me} \|\bR_{22}^{sk}\|_F^2$ can be obtained with a similar reasoning.
 \end{proof}

The step 4 of Algorithm \ref{alg:RSRRQR_k} guarantees that the trailing submatrix in $\cR_k(\bM^{sk}\mmP) = \begin{pmatrix}
    \bR^{sk}_{11}       & \bR^{sk}_{12}\\
			\bO    & \bR^{sk}_{22}
\end{pmatrix}$ satisfies 
$$
\mr_j(\bR^{sk}_{22}) \leq \tau,\ \forall 1\leq j \leq n-k.
$$
Using Theorem \ref{th_trailingmatrix}, we conclude that when a threshold is given, we have
$$
\mr_j(\bR_{22}) \leq \frac \tau {\sqrt{1 - \me}},\ \forall 1\leq j \leq n-k.
$$

In the following theorem, we show that Algorithm \ref{alg:RSRRQR_square} computes a factorization that satisfies the strong rank-revealing property from \eqref{def_srrqr}. Unlike Theorem \ref{th_rsrrqr1}, which was proven for tall and skinny matrices or low-rank matrices, we assume here that $\mmO$ is an $(\me,\md,k+1)$-OSE rather than an $\me$-embedding. The key distinction is that an $\me$-embedding guarantees the embedding of one specific subspace, whereas an OSE ensures that any $(k+1)$-dimensional subspace is embedded with high probability. In the case of general matrices, several subspaces, spanned by a subset of the columns of $\bM$, need to be embedded. Since we do not have prior knowledge of this subspace before computing the strong RRQR factorization of $\bM^{sk}$, the use of an OSE is necessary.

\begin{theorem} \label{th_rsrrqr_square}
    Given a matrix $\bM \in \Rmn$ and an $(\me,\md,k+1)$-OSE $\mmO$, let $\bM^{sk} = \mmO\bM$.  Let $\bM^{sk} \mmP = \bQ^{sk} \bR^{sk} = \bQ^{sk}\begin{pmatrix}
			\bR^{sk}_{11}      & \bR^{sk}_{12}\\
			\bO    & \bR^{sk}_{22}
		\end{pmatrix}$ be the strong RRQR factorization of $\bM^{sk}$ with given $k$ and $f$.  Let $\bM \mmP = \bQ \bR$ be the partial QR with no pivoting factorization of $\bM\ \mmP$ and  $\mathcal{R}_k(\bM\  \mmP) = \begin{pmatrix}
			\bR_{11}      & \bR_{12}\\
			\bO    & \bR_{22}
		\end{pmatrix}$.
 With probability at least $1 - \md$, the QR factorization $\bM \mmP = \bQ \bR$ satisfies 
    \be \label{RRQR} 
         1 \leq \frac{\ms_i(\bM)}{\ms_i(\bR_{11})}, \frac{\ms_j(\bR_{22})}{\ms_{j + k}(\bM)} \leq \sqrt{1+\tilde{f}^2k(n-k)}, \ |(\bR_{11}^{-1}\bR_{12})_{i,j}| \leq \tilde{f}
		\ee
		for all $1\leq i \leq k,\ 1\leq j\leq n-k$, where $\tilde{f} = \sqrt{\frac{1+\me}{1-\me}}f$.
\end{theorem}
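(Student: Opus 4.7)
\noindent
\textbf{Proof plan for Theorem \ref{th_rsrrqr_square}.} The plan is to reduce this statement to the proof of Theorem \ref{th_rsrrqr1}, the main task being to justify that the crucial step there --- invoking Lemma \ref{lma2} for each pair $(i,j)$ --- is still available even though we no longer assume $\mmO$ is an $\me$-embedding of the (possibly $n$-dimensional) subspace $range(\bM)$.

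First I would identify the collection of subspaces that actually need to be embedded. Inspecting the proof of Lemma \ref{lma2}, one sees that the ratio $|\operatorname{det}(\bar{\bR}_{11}^{(i,j)})/\operatorname{det}(\bR_{11})|$ is expressed as a quotient of residuals of two least-squares problems of the form $\min_{\bx\in\R^{k-1}}\|\bM_{k-1}\bx - \bv\|_2$, in which $\bM_{k-1}$ consists of $k-1$ columns of $\bM$ and $\bv$ is one further column. Via Lemma \ref{lma1}, preserving this ratio requires $\mmO$ to be an $\me$-embedding of $range([\bM_{k-1},\bv_i,\bv_{j+k}])$, a subspace of dimension at most $k+1$. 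Since the output permutation $\mmP$ of Algorithm \ref{alg:RSRRQR_square} is not known in advance, I would enumerate \emph{all} subspaces of $\R^m$ of the form $\mathrm{span}\{\bv_{i_1},\ldots,\bv_{i_{k+1}}\}$ for $(k+1)$-subsets $\{i_1,\ldots,i_{k+1}\}\subset\{1,\ldots,n\}$; there are at most $\binom{n}{k+1}$ such fixed subspaces, each of dimension at most $k+1$.

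Next I would apply the $(\me,\md,k+1)$-OSE property to each of these fixed subspaces together with a union bound so that, on an event of probability at least $1-\md$ (absorbing the combinatorial factor $\binom{n}{k+1}$ into the OSE parameter, as is standard and already implicit in the statement of the theorem), $\mmO$ is simultaneously an $\me$-embedding of all of them. On this event, whatever permutation $\mmP$ the strong RRQR of $\bM^{sk}$ produces, for every pair $1\le i\le k$, $1\le j\le n-k$ the relevant $(k+1)$-dimensional subspace is one of the enumerated ones and is therefore $\me$-embedded by $\mmO$. Consequently, Lemma \ref{lma2} applies to each pair $(i,j)$ at the output configuration.

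From here the argument is essentially identical to the proof of Theorem \ref{th_rsrrqr1}: the strong RRQR applied to $\bM^{sk}$ guarantees $|\operatorname{det}(\bar{\bR}_{11}^{sk\,(i,j)})/\operatorname{det}(\bR_{11}^{sk})|<f$ for all $(i,j)$; invoking Lemma \ref{lma2} on each relevant subspace gives $|\operatorname{det}(\bar{\bR}_{11}^{(i,j)})/\operatorname{det}(\bR_{11})|<\sqrt{(1+\me)/(1-\me)}\,f=\tilde f$; hence $\rho(\bR,k)\le\tilde f$; and finally Theorem \ref{th_srrqr1} yields the desired singular value and entrywise bounds \eqref{RRQR}. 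The main obstacle is the opening probabilistic step: since the subspaces we need embedded are chosen by the algorithm \emph{after} seeing $\mmO$, we must pay for a uniform guarantee over all candidate $(k+1)$-subspaces of column spans, and the cleanest way to do so is the union bound sketched above.
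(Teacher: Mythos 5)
Your argument shares the paper's core structure---reduce to Lemma \ref{lma1} on residuals, show $\rho(\bR,k)\leq\tilde{f}$, and apply Theorem \ref{th_srrqr1}---but you treat the embedding step differently, and in fact take the more conservative route. The paper does \emph{not} take a union bound over $\binom{n}{k+1}$ column subsets. Instead it singles out the maximizing pair $(i_0,j_0)=\argmax_{i,j}|\operatorname{det}(\bar{\bR}_{11}^{(i,j)})/\operatorname{det}(\bR_{11})|$ at the output configuration, observes that controlling the ratio at this one pair controls $\rho(\bR,k)$, and hence that only the \emph{single} $(k+1)$-dimensional subspace $span(\cV_1\cup\cV_2)$---spanned by the first $k$ permuted columns together with $\bv_{j_0+k}$---needs to be embedded; it then invokes the $(\me,\md,k+1)$-OSE property on that one subspace to get probability $1-\md$, and applies Lemma \ref{lma1} twice (once for $\bv_{i_0}$, once for $\bv_{j_0+k}$) directly rather than via Lemma \ref{lma2}.

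You have, however, correctly identified the subtlety the paper's route skips: the subspace $span(\cV_1\cup\cV_2)$ is a function of $\mmO$ (through the permutation $\mmP$ and the index pair $(i_0,j_0)$), while Definition \ref{def_oblivious_embeddings} guarantees embedding only of a subspace fixed independently of $\mmO$. Your union bound over all $(k+1)$-subsets of columns is the standard fix, but be careful: it does not prove the theorem exactly as stated. A union bound gives success probability $1-\binom{n}{k+1}\md$, not $1-\md$, and your claim that the combinatorial factor is "already implicit in the statement of the theorem" is not supported---the theorem pairs an $(\me,\md,k+1)$-OSE with failure probability $\md$ with no additional scaling, so one would need to inflate the sketch dimension by roughly an extra $\log\binom{n}{k+1}$ factor (equivalently, pass $\md/\binom{n}{k+1}$ to the OSE) to recover the stated constant. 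In short, your route is honest about the data-dependence the paper glosses over, at the cost of a quantitatively weaker probability guarantee; the paper's route hits the stated constant by treating the algorithm-chosen subspace as if it were fixed, which is precisely the step you flagged as the main obstacle.
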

\begin{proof}
    We have the following two factorizations, $\bM \mmP = \bQ \bR$ and $\bM^{sk} \mmP = \bQ^{sk} \bR^{sk}$, where $\bR = \mathcal{R}_k(\bM \mmP)= \begin{pmatrix}
			\bR_{11}      & \bR_{12}\\
			\bO    & \bR_{22}
		\end{pmatrix}$ and $\bR^{sk} = \mathcal{R}_k(\bM^{sk} \mmP) =\begin{pmatrix}
			\bR^{sk}_{11}      & \bR^{sk}_{12}\\
			\bO    & \bR^{sk}_{22}
		\end{pmatrix}.$ 

We consider increasing the value of the determinant of $\bR_{11}$ by interchanging two columns, and we suppose that the ratio  $\operatorname{det}(\bar{\bR}_{11}^{(i,j)})/\operatorname{det}(\bR_{11})$ is maximized when the columns $i_0$ and $j_0+k$ are interchanged, where $\bar{\bR}_{11}$ is the factor obtained after interchanging $i_0$ and $j_0+k
$, $i.e.\  (i_0, j_0) = \argmax\limits_{\ijk} \lab \operatorname{det}(\bar{\bR}_{11})/\operatorname{det}(\bR_{11}) \rab $. 

By the strong RRQR of $\bM^{sk}$, the step in line 4 of Algorithm \ref{alg:RSRRQR_square} guarantees that $\rho(\bR^{sk},k) = \max\limits_{\ijk} \lab \operatorname{det}(\bar{\bR}^{sk  (i,j}_{11})/\operatorname{det}(\bR^{sk)}_{11}) \rab \leq f$.  

Let $\bM\mmP = \bigg(\bv_1, \ \bv_2,\ \cdots, \ \bv_n\ \bigg)$ and $\bM^{sk}\mmP = \bigg(\bv_1^{sk}, \ \bv_2^{sk},\  \cdots,\ \bv_n^{sk}\bigg)$, where $\bv_i^{sk} = \mmO \bv_i, \ \forall 1\leq i \leq n$. Let $\bM_k = \bigg(\bv_1, \ \bv_2,\  \cdots, \ \bv_k\ \bigg)$,  $\bar{\bM}_k = \bigg(\bv_1, \ \bv_2,\  \cdots, \ \bv_{i_0-1},\ \bv_{j_0+k},\ \bv_{i_0+1},\ \cdots,\ \bv_k\bigg)$ and $\bM_{k-1} = \bigg(\bv_1, \ \bv_2,\  \cdots, \ \bv_{i_0-1},\ \bv_{i_0+1},\ \cdots,\ \bv_k\bigg).$ In addition, let $\cV_1$ denote $range(\bM_k)$ and $\cV_2$ denote $range(\bar{\bM}_k)$.
  
Note that $dim(span(\cV_1 \cup \cV_2)) = k+1$, the $(\me,\delta,k+1)$-OSE matrix $\mmO$ is an $\me$-embedding of $span(\cV_1 \cup \cV_2)$ with probability at least $1-\md$. Then by Lemma \ref{lma1}, given that $\mmO$ is an $\me$-embedding of $span(\cV_1 \cup \cV_2)$, the following two inequalities hold with probability $1 - \delta$:
 \be \label{event1}
		\frac{1}{\sqrt{1+\me}}\min\limits_{x\in \R^{k-1}}\|\mmO ( \bM_{k-1}\bx-\bv_{i_0} )\|_2 \leq \min\limits_{x\in \R^{k-1}}\| \bM_{k-1}\bx-\bv_{i_0}\|_2 \leq \frac{1}{\sqrt{1-\me}}\min\limits_{x\in \R^{k-1}}\|\mmO ( \bM_{k-1}\bx-\bv_{i_0} )\|_2,
		\ee 
  \be \label{event2}
		\frac{1}{\sqrt{1+\me}}\min\limits_{x\in \R^{k-1}}\|\mmO ( \bM_{k-1}\bx-\bv_{j_0+k} )\|_2 \leq \min\limits_{x\in \R^{k-1}}\| \bM_{k-1}\bx-\bv_{j_0+k}\|_2 \leq \frac{1}{\sqrt{1-\me}}\min\limits_{x\in \R^{k-1}}\|\mmO ( \bM_{k-1}\bx-\bv_{j_0+k} )\|_2.
		\ee
 Deriving from \eqref{event1} and \eqref{event2}, we obtain 
  \[
  \begin{aligned}
      \rho(\bR,k) = \max\limits_{1\leq i\leq k,1\leq j\leq n-k} \lab \frac{\operatorname{det}(\bar{\bR}_{11}^{(i,j)})}{\operatorname{det}(\bR_{11})} \rab & =\frac{\min\limits_{x\in \R^{k-1}}\| \bM_{k-1}\bx-\bv_{j_0+k}\|_2}{\min\limits_{x\in \R^{k-1}}\|\bM_{k-1}\bx-\bv_{i_0}\|_2}\\
      &\leq\sqrt{\frac{1+\me}{1-\me}} \frac{\min\limits_{x\in \R^{k-1}}\|\mmO ( \bM_{k-1}\bx-\bv_{j_0+k} )\|_2}{\min\limits_{x\in \R^{k-1}}\|\mmO ( \bM_{k-1}\bx-\bv_{i_0} )\|_2}\\
      &=\sqrt{\frac{1+\me}{1-\me}} \lab \operatorname{det}(\bar{\bR}_{11}^{sk (i,j)})/\operatorname{det}(\bR_{11}^{sk}) \rab\\
      &\leq\sqrt{\frac{1+\me}{1-\me}}\rho(\bR^{sk},k)\\
      &\leq \sqrt{\frac{1+\me}{1-\me}}f.
  \end{aligned}
  \]
  
  The proof is finished by using Theorem \ref{th_srrqr1}.
\end{proof}

We conclude that randomized strong rank-revealing QR can be used when the target rank $k$ is small, even if the number of columns $n$ is large and the matrix is full rank. The algorithms presented can also be extended to block algorithms, using the same approach as randomized QRCP. We do not discuss this further in this paper, since the extension is straightforward. 

\section{Numerical experiments}
\label{section_tests}
In this section we analyze the numerical efficiency of the randomized strong RRQR factorization presented in Algorithms \ref{alg:RSRRQR} and \ref{alg:RSRRQR_k}. We focus in particular on the approximation of the singular values for different test matrices by considering the ratios $\ms_i(\bM)/\ms_i(\bR_{11})$ and $\ms_j(\bR_{22})/\ms_{j+k}(\bM)$. Since in practice QR with column pivoting is widely used and provides good approximations of the singular values, we compare the results of randSRRQR-$\tau$ presented in Algorithm \ref{alg:RSRRQR}, or randSRRQR-$r$ presented in Algorithm \ref{alg:RSRRQR_k} with those of the deterministic strong RRQR and QRCP. Furthermore, we also consider the diagonal entries of the $\bR$ factors computed by randSRRQR-$\tau$ (or randSRRQR-$r$) (referred to as $R$-values) and those of the $\textbf{L}$ factors (referred to as $L$-values) obtained from the QLP factorization \cite{qlp}. This factorization is obtained as follows. First, the randSRRQR-$\tau$ or randSRRQR-$r$ factorization of $\bM$ is computed to obtain $\bM \mmP = \bQ \bR$. Then the QR factorization with no pivoting of $\bR^T$ leads to $\bR^T = \bP \bL^T $, where $\bL$ is lower-triangular. The QLP factorization is thus $\bM\mmP = \bQ\bL\bP^T$. In practice, these values are observed to provide also an approximation of the singular values of $\bM$. The experiments are performed using Matlab on a laptop with a 3.4 GHz Intel Core i5 CPU and 16GB of RAM. 

We use the following test matrices of dimensions $m \times n$:
\begin{enumerate}
    \item 
    Kahan matrix: an $n\times n$ Kahan matrix defined as
\be \label{def:kahan}
\bK_n = diag(1, s, s^2, \cdots, s^{n-1})\begin{pmatrix}
	1 & -c & -c & \cdots & -c\\
	  0 & 1 & -c &  \cdots & -c\\
        0 & 0 & 1 &  \cdots & -c\\
        \vdots & \vdots & \vdots & \ddots & \vdots \\
        0 & 0 & 0 & \cdots & 1
	\end{pmatrix},
\ee
where $c>0$, $s>0$, and $c^2 + s^2 = 1$. To be able to use an SRHT as a random sketching matrix, the number of rows $m$ should be a power of $2$. Thus we extend $\bK_n$ with a block of zeros to obtain a tall and skinny matrix 
\be
\bM = \begin{pmatrix}
    \bK_n    \\
	\bO  
\end{pmatrix}
\ee
of dimensions $m\times n, m\gg n$, so that $m$ is a power of $2$.
\item Devil's stairs: a matrix with several gaps between its singular values. In the $k$th stair, all the singular values of $\bM$ are equal to $q^k$, where $q$ is a parameter and $0<q<1$. The codes from \cite{qlp} are used to generate this matrix, with $q = 10^{-3}$ and the length of the stairs equal to $100$.
\item Stewart matrix: matrix $\bM = \bU \pmb{\Sigma} \bV + 0.1 \ms_n * \text{rand}(m,n)$, see description in the text and \cite{qlp}.
\item H-C matrix: matrix with prescribed singular values, see description in the text and \cite{qlp_lra}.
\end{enumerate}

Three different dimensions are used in the experiments for each test matrix: $8192\times500$, $16384\times1000$, and $32768\times2000$. We use Algorithm \ref{alg:RSRRQR_k} for each test matrix. It computes a randomized strong RRQR for a given tolerance and uses as tolerance $\tau = 1e-10$ and $f = 2$ as parameters for the deterministic SRRQR of the sketch matrix. For Kahan matrix, we do an additional experiment, which is labeled as Kahan*, by using  Algorithm \ref{alg:RSRRQR} that computes a randomized strong RRQR for a given rank $k$ with parameters $f = 2$ and $k = n-1$.  The random sketching matrix $\mmO \in \bR^{d \times m}$ used in the experiments is an SRHT.  Unless otherwise specified, the sketch dimension $d$ is set as $\lfloor 3nlog(m)/log(n)\rfloor$.

Table \ref{table_time_rank} summarizes the test matrices and their dimensions. It also displays the numerical rank computed by strong RRQR from Algorithm \ref{alg:SRRQR_efficient} and randomized strong RRQR from Algorithm \ref{alg:RSRRQR_k} and the time (in seconds) required to compute it. These results show that the randomized version accelerates computing a strong rank-revealing factorization. It leads to a factor of $15.9\times$ speedup for the matrix Devil's stairs of dimensions $8192\times 500$. We observe that the numerical ranks computed by the two methods can be slightly different, since in the deterministic case the rank is determined by $\bM$, while in the randomized case it is determined by $\bM^{sk}$. The norm of each column of $\bM^{sk}$ can be slightly larger or smaller than that of $\bM$.  Consider the two strong rank-revealing QR factorizations obtained by Algorithm \ref{alg:SRRQR_efficient} and Algorithm \ref{alg:RSRRQR_k} as
$\bM \mmP = \bQ \bR = \bQ \begin{pmatrix}
    \bR_{11} & \bR_{12}\\
    \bO      & \bR_{22}
\end{pmatrix}$ and 
$\bM \mmP' = \bQ' \bR' = \bQ' \begin{pmatrix}
    \bR_{11}' & \bR_{12}'\\
    \bO      & \bR_{22}'
\end{pmatrix}$ respectively, where $\bR_{11}\in \R^{k_1 \times k_1}$ and $\bR_{11}'\in \R^{k_2 \times k_2}$.  We have that $k_1$ is the smallest integer such that $\max\limits_{j}\mr_j(\bR_{22}) \leq \tau$ under the constraint $\rho(\bR,k_1)\leq f$, and $k_2$ is the smallest integer such that $\max\limits_{j} \mr_j(\bR_{22}^{sk}) \leq \tau$ under the constraint $\rho(\bR^{sk},k_2)\leq f$, where $\bR^{sk} = \cR_k(\bM^{sk} \mmP') = \begin{pmatrix}
    \bR_{11}^{sk} & \bR_{12}^{sk}\\
    \bO      & \bR_{22}^{sk}
\end{pmatrix}$. Using Theorem and \ref{th_rsrrqr1} and \ref{th_trailingmatrix}, we have that $k_2$ satisfies $\max\limits_{j}\mr(\bR_{22}') \leq \frac \tau {\sqrt{1 - \me}}$ and $\rho(\bR',k_2)\leq \sqrt{\frac{1+\me}{1-\me}} f$.

\begin{table}[htbp]
\begin{tabular}{|l|l|ll|ll|}
\hline
\multirow{2}{*}{Matrix}          & \multirow{2}{*}{Dimensions} & \multicolumn{2}{l|}{Performing SRRQR on $\bM$}            & \multicolumn{2}{l|}{Performing SRRQR on $\bM^{sk}$}               \\ \cline{3-6} 
                                 &                       & \multicolumn{1}{l|}{Execution time(s)} & Rank & \multicolumn{1}{l|}{Execution time(s)} & Rank \\ \hline
\multirow{3}{*}{Kahan}           & $8192 \times 500$              & \multicolumn{1}{l|}{0.3267}            & 333  & \multicolumn{1}{l|}{0.0493}            & 337  \\ \cline{2-6} 
                                 & $16384 \times 1000$            & \multicolumn{1}{l|}{36.1454}           & 547  & \multicolumn{1}{l|}{5.7634}            & 528  \\ \cline{2-6} 
                                 & $32768 \times 2000$            & \multicolumn{1}{l|}{281.5560}          & 1547 & \multicolumn{1}{l|}{9.7826}            & 1520 \\ \hline
\multirow{3}{*}{Kahan*}          & $8192 \times 500$              & \multicolumn{1}{l|}{0.2411}            & 499  & \multicolumn{1}{l|}{0.0388}            & 499  \\ \cline{2-6} 
                                 & $16384 \times 1000$            & \multicolumn{1}{l|}{1.1168}            & 999  & \multicolumn{1}{l|}{0.3806}            & 999  \\ \cline{2-6} 
                                 & $32768 \times 2000$            & \multicolumn{1}{l|}{7.0915}            & 1999 & \multicolumn{1}{l|}{4.1509}            & 1999 \\ \hline
\multirow{3}{*}{Stewart}         & $8192 \times 500$              & \multicolumn{1}{l|}{0.7033}            & 99   & \multicolumn{1}{l|}{0.0359}            & 100   \\ \cline{2-6} 
                                 & $16384 \times 1000$            & \multicolumn{1}{l|}{5.1523}            & 97   & \multicolumn{1}{l|}{0.9206}            & 97   \\ \cline{2-6} 
                                 & $32768 \times 2000$            & \multicolumn{1}{l|}{54.8160}           & 95   & \multicolumn{1}{l|}{8.4531}            & 95   \\ \hline
\multirow{3}{*}{Deveil's stairs} & $8192 \times 500$              & \multicolumn{1}{l|}{0.5857}            & 400  & \multicolumn{1}{l|}{0.0369}            & 400  \\ \cline{2-6} 
                                 & $16384 \times 1000$            & \multicolumn{1}{l|}{6.7717}            & 800  & \multicolumn{1}{l|}{0.8664}            & 799  \\ \cline{2-6} 
                                 & $32768 \times 2000$            & \multicolumn{1}{l|}{61.1059}           & 1600 & \multicolumn{1}{l|}{8.3602}            & 1600 \\ \hline
\multirow{3}{*}{H-C}             & $8192 \times 500$              & \multicolumn{1}{l|}{0.5351}            & 334  & \multicolumn{1}{l|}{0.0778}            & 333  \\ \cline{2-6} 
                                 & $16384 \times 1000$            & \multicolumn{1}{l|}{4.6086}            & 667  & \multicolumn{1}{l|}{1.0987}            & 665  \\ \cline{2-6} 
                                 & $32768 \times 2000$            & \multicolumn{1}{l|}{36.3005}           & 1334 & \multicolumn{1}{l|}{13.5850}           & 1331 \\ \hline
\end{tabular}
\caption{Deterministic SRRQR versus Randomized SRRQR: Execution time.}
\label{table_time_rank}
\end{table}

We first discuss the Kahan matrix. Because of its special structure, no columns are interchanged during QRCP. Thus, the factorization computed by QRCP is exactly $\bM \bI = \bI \bM$, that is, the $\bQ$ factor is the identity matrix and the $\bR$ factor is the Kahan matrix itself. The results obtained by randomized SRRQR from Algorithm \ref{alg:RSRRQR} (randSRRQR-$r$) for a given rank $k = 499$ and $f = 2$ are presented in Figure \ref{fig:Kahan1}.  The dimension of the Kahan matrix is $8192 \times 500$. The ratios of the singular values $\ms_i(M)/\ms_i(R_{11})$ are displayed for both algorithms, QRCP and randSRRQR-$r$. We observe that the ratios are close to $1$ for QRCP, except for the last one, which becomes exponentially large. The ratios obtained by randSRRQR-$r$ are closer to $1$ and the last $10$ ratios have the first $10$ decimals $0$. The last ratios are also given in Table \ref{tab:Kahan}. The singular values of $\bM$ and those of $\bR_{11}$ computed by randSRRQR-$r$, SRRQR and QRCP are also given in Figure \ref{fig:Kahan2}, where we also display the obtained L-values and R-values.

\begin{figure}[htbp]
\begin{minipage}[t]{0.48\textwidth}
\centering
    \includegraphics[width = 8.23cm]{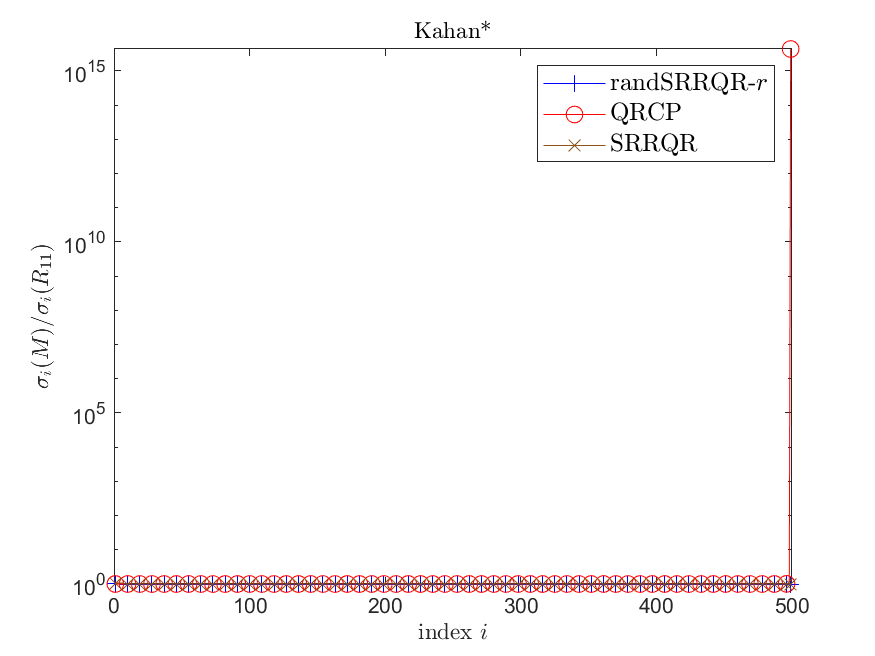}
    \caption{Ratios $\ms_i(\bM)/\ms_i(\bR_{11})$ of $8192\times500$ Kahan matrix}
    \label{fig:Kahan1}
\end{minipage}
    \begin{minipage}[t]{0.48\textwidth}
    \centering
    \includegraphics[width = 8.23cm]{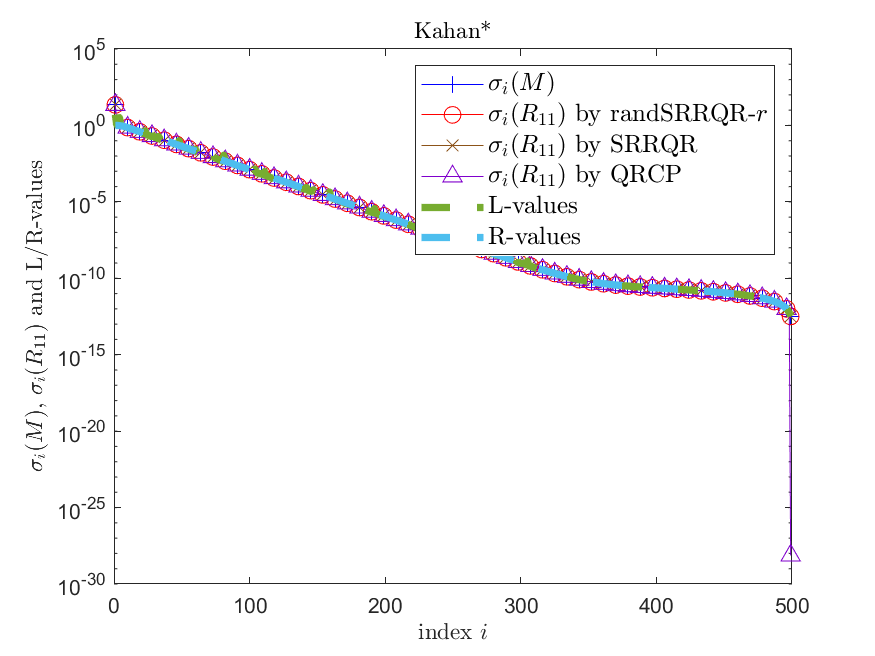}
    \caption{Singular values and L/R-values for $8192\times500$ Kahan matrix}
    \label{fig:Kahan2}
\end{minipage}
\end{figure}

 \begin{table}
    \begin{minipage}[c]{0.45\textwidth}
    \centering
    \begin{tabular}{|c|c|c|}
\hline
index j & randSRRQR-$r$ & QRCP \\
\hline
494   & 1.0000& 1.0050\\
\hline
495   & 1.0000& 1.0070\\
\hline
496   & 1.0000& 1.0107\\
\hline
497   & 1.0000& 1.0186\\
\hline
498   & 1.0000& 1.0413\\
\hline
499   & 1.0000& 4.3406e+15  \\\hline               
\end{tabular}
\caption{Ratios $\ms_i(M)/\ms_i(\bR_{11})$ for $8192\times500$ Kahan matrix}
    \label{tab:Kahan}
    \end{minipage}
    \begin{minipage}[c]{0.45\linewidth}
    \centering
    \begin{tabular}{|c|c|c|}
\hline
index j & randSRRQR-$\tau$ & QRCP \\
\hline
395   & 6.6429& 6.4632\\
\hline
396   & 6.9252& 7.0764\\
\hline
397   & 8.1693& 7.3105\\
\hline
398   & 9.0129& 8.8939\\
\hline
399   & 13.2641& 12.4902\\
\hline
400   & 15.8370 & 16.1388  \\\hline           
\end{tabular}
\caption{Ratios $\ms_i(M)/\ms_i(\bR_{11})$ for $8192\times500$ Devil's stairs}
    \label{tab:Devil1}
    \end{minipage}
\end{table}

The results obtained for the devil's stairs matrix of dimension $8192\times500$ are presented in Figures \ref{fig:Devil1} and \ref{fig:Devil2}. We observe that the ratios $\ms_i(M)/\ms_i(R_{11})$ obtained by randSRRQR-$\tau$ (Algorithm \ref{alg:RSRRQR_k}) for a tolerance of $\tau = 10^{-10}$, SRRQR for a tolerance of $\tau = 10^{-10}$, and QRCP are very close to each other, as it can be seen in Figure \ref{fig:Devil1}. The results obtained by QRCP and SRRQR are equal, since SRRQR does no column interchange in the inner loop of each iteration. We observe that the last ratio is slightly larger for QRCP than for randSRRQR-$\tau$. The last ratios are shown in Table \ref{tab:Devil1}. The singular values of $\bM$ and those of $\bR_{11}$ computed by randSRRQR-$\tau$, SRRQR, and QRCP are displayed in  Figure \ref{fig:Devil2}, along with the corresponding L-values and R-values. It can be seen that the L-values approximate the singular values of $\bM$ better than the R-values or the singular values of $\bR_{11}$ computed by the three methods. 

\begin{figure}[htbp]
\begin{minipage}[t]{0.48\textwidth}
\centering
    \includegraphics[width = 8.23cm]{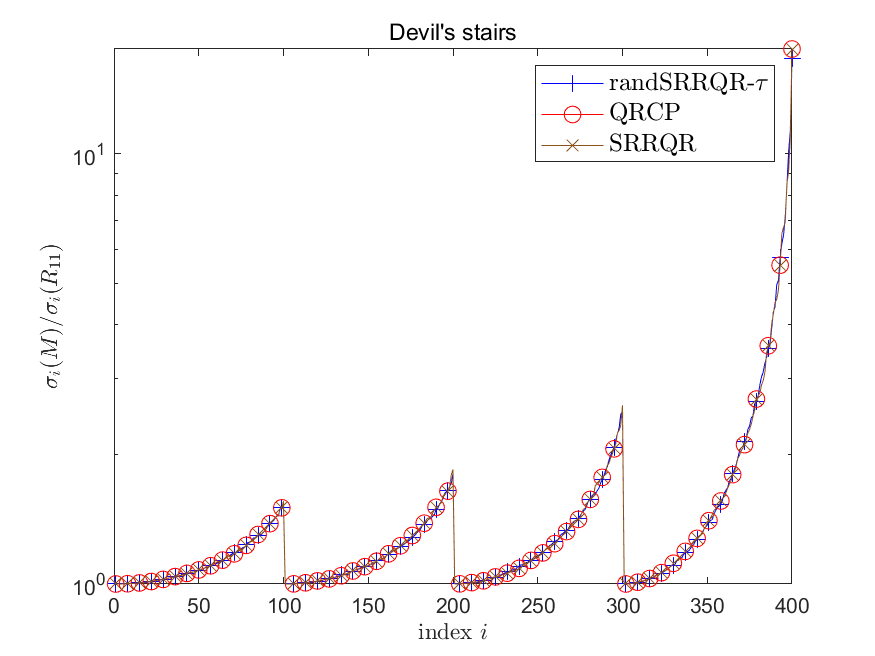}
    \caption{Ratios $\ms_i(\bM)/\ms_i(\bR_{11})$ of $8192\times500$ Devil's stairs}
    \label{fig:Devil1}
\end{minipage}
    \begin{minipage}[t]{0.48\textwidth}
    \centering
    \includegraphics[width = 8.23cm]{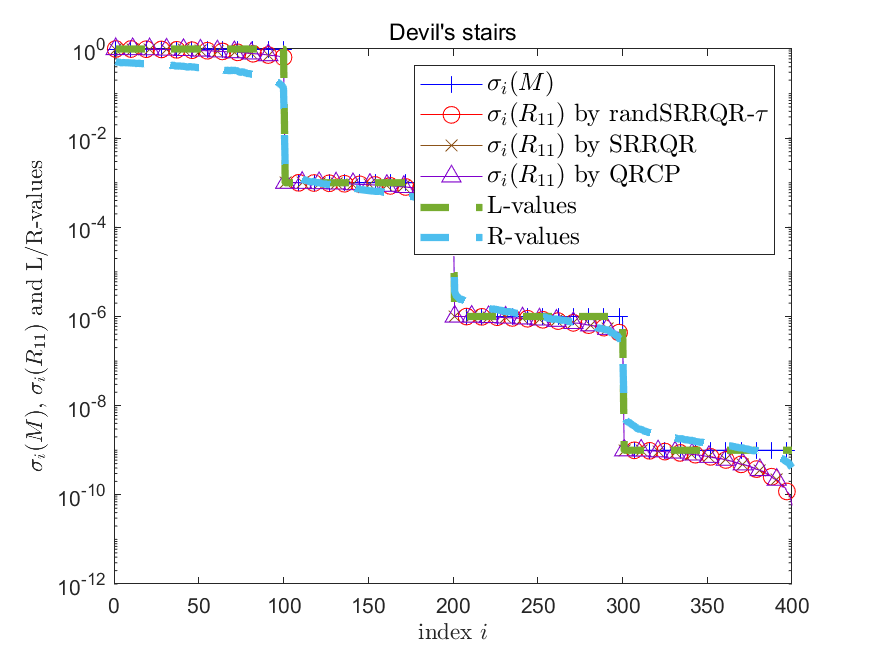}
    \caption{Singular values and L/R-values for $8192\times500$ Devil's stairs}
    \label{fig:Devil2}
\end{minipage}
\end{figure}



\begin{figure}[htbp]
\begin{minipage}[t]{0.48\textwidth}
\centering
    \includegraphics[width = 8.23cm]{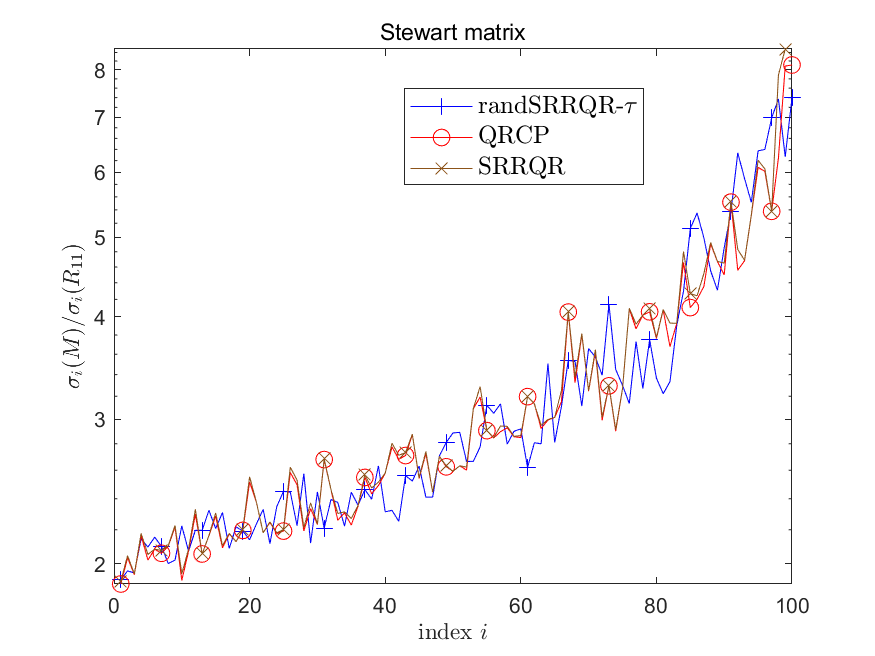}
    \caption{Ratios $\ms_i(\bM)/\ms_i(\bR_{11})$ of $8192\times500$ Stewart matrix}
    \label{fig:Stewart1}
\end{minipage}
    \begin{minipage}[t]{0.48\textwidth}
    \centering
    \includegraphics[width = 8.23cm]{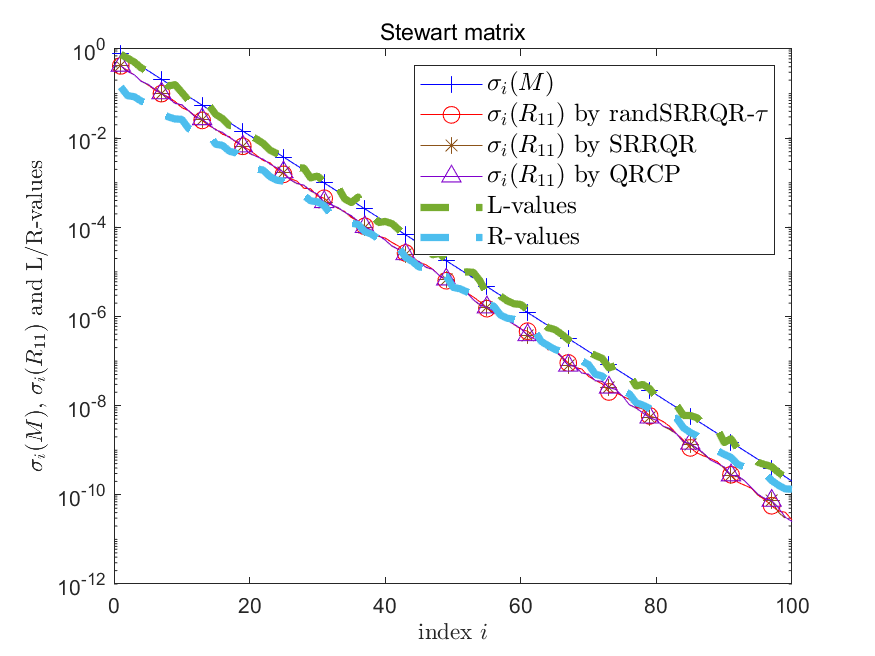}
    \caption{Singular values and L/R-values for $8192\times500$ Stewart matrix}
    \label{fig:Stewart2}
\end{minipage}
\end{figure}

 \begin{figure}[htbp]
\begin{minipage}[t]{0.48\textwidth}
\centering
    \includegraphics[width = 8.23cm]{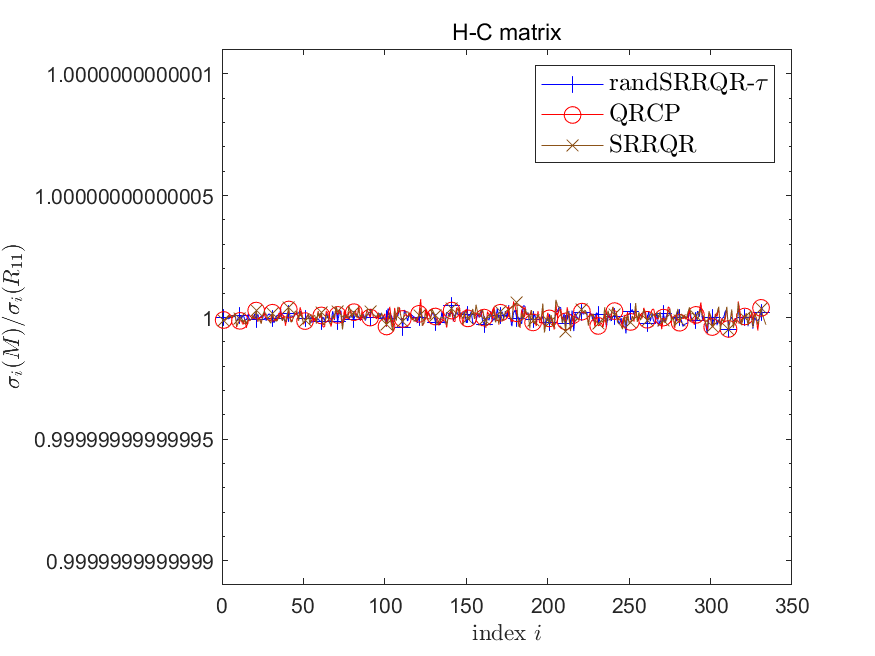}
    \caption{Ratios $\ms_i(\bM)/\ms_i(\bR_{11})$ of $8192\times500$ H-C matrix}
    \label{fig:hc1}
\end{minipage}
    \begin{minipage}[t]{0.48\textwidth}
    \centering
    \includegraphics[width = 8.23cm]{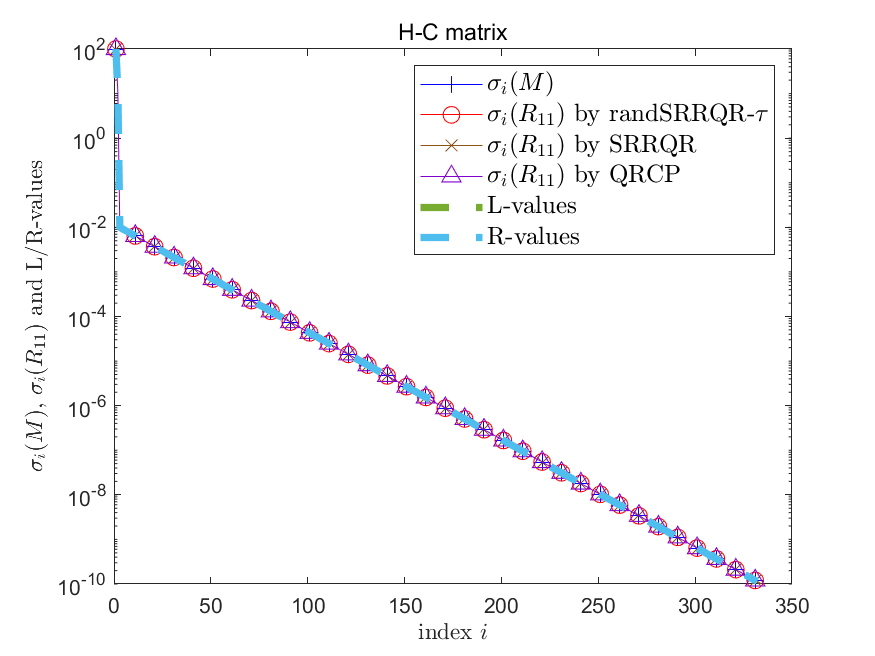}
    \caption{Singular values and L/R-values for $8192\times500$ H-C matrix}
    \label{fig_hc2}
\end{minipage}
\end{figure}

We now consider Stewart's matrix\cite{qlp}, which is a perturbation of a matrix with prescribed singular values $1,\ q,\ q^2,\ \cdots,\ q^{n/2},\ 0,\ 0,\ \cdots,\ 0$. The perturbation is obtained by adding a random matrix $c\cdot$rand(m,n), with $c$ being a small constant. In our tests, we set $q$ as $0.8$ and $c$ as $q^{n/2}$. We set $\tau$ as $10^{-10}$ for randSRRQR-$\tau$ and  SRRQR. The results obtained for a matrix of dimension $8192\times500$ are presented in Figures \ref{fig:Stewart1} and \ref{fig:Stewart2}. We can see that the ratios $\ms_i(\bM)/\ms_i(\bR_{11})$ computed by three methods, randSRRQR-$\tau$, QRCP, and SRRQR, are comparable to each other. We can see that the approximations of singular values computed by randSRRQR-$\tau$ (Algorithm \ref{alg:RSRRQR_k}) are comparable to those computed by QRCP. The results computed by QRCP and SRRQR are almost the same. The numerical rank determined by SRRQR is $99$ while the one determined by randSRRQR-$\tau$ is $100$. As shown in Figure \ref{fig:Stewart2}, the L-values approximate the singular values of $\bM$ better than the R-values or the singular values of $\bR_{11}$ computed by the three methods.

Our last test case is the H-C matrix, defined slightly different from \cite{qlp_lra}. An H-C matrix of dimension $m \times n$ is defined as $\bM = \bU \pmb{\Sigma}$, where $\bU\in\Rmn$ is a random orthogonal matrix and $\pmb{\Sigma}\in\Rnn$ is a diagonal matrix whose diagonal elements are $100$, $10$ and $n-2$ numbers logarithmically evenly spaced between $10^{-2}$ and $10^{-14}$.  For a H-C matrix with dimension $8192 \times 500$, the results are displayed in Figures \ref{fig:hc1} and \ref{fig_hc2}. We set $\tau$ as $10^{-10}$ for randSRRQR-$\tau$ and  SRRQR. We can see that these ratios are very close to $1$, thus randSRRQR-$\tau$, SRRQR and QRCP are all effective in approximating the singular values of this matrix. In Figure \ref{fig_hc2}, we see that $\ms_i(\bR_{11})$ computed by the three methods and the L/R-values approximate $\ms_i(\bM)$ very well.

\section{Conclusions}
This paper discusses a randomized algorithm that can be used to effectively select columns from a matrix while revealing its spectrum. It thus leverages random sketching to reduce the computational and communication costs, enabling effective low-rank approximation, rank estimation, singular value approximation, and null-space approximation. A key contribution of this paper is the analysis of properties that can be preserved through sketching as the ratios of volumes of matrices for which only one column is different. While this property is used to prove strong rank-revealing properties of the algorithms discussed in this paper, it can be used in other contexts related to finding the submatrix of maximal volume of a matrix\cite{maximal_volume_Goreinov}. Future work includes exploring the usage of mixed precision and the application of the randomized strong RRQR factorization in other low-rank matrix approximation algorithms as interpolative decompositions.

\section{Acknowledgements}
This project has received funding from the European Research
Council (ERC) under the European Union’s Horizon 2020 research and innovation program (grant agreement No 810367).

 \bibliographystyle{plain}
\bibliography{bounds}

\begin{thebibliography}{10}

\bibitem{balabanov2022randomizedcholeskyqrfactorizations}
O.~{Balabanov}.
\newblock {Randomized Cholesky QR factorizations}.
\newblock {\em arXiv e-prints}, page arXiv:2210.09953, October 2022.

\bibitem{rgs}
O.~Balabanov and L.~Grigori.
\newblock Randomized {G}ram--{S}chmidt {P}rocess with {A}pplication to {GMRES}.
\newblock {\em SIAM Journal on Scientific Computing}, 44(3):A1450--A1474, 2022.

\bibitem{blocked_RGS}
O.~Balabanov and L.~Grigori.
\newblock Randomized {B}lock {G}ram–{S}chmidt {P}rocess for the {S}olution of
  {L}inear {S}ystems and {E}igenvalue {P}roblems.
\newblock {\em SIAM Journal on Scientific Computing}, 47(1):A553--A585, 2025.

\bibitem{golub_angle_space}
Å. Björck and G.~H. Golub.
\newblock Numerical {M}ethods for {C}omputing {A}ngles {B}etween {L}inear
  {S}ubspaces.
\newblock {\em Mathematics of Computation}, 27(123):579--594, 1973.

\bibitem{rrqr_survey_ipsen}
S.~Chandrasekaran and I.~C.~F. Ipsen.
\newblock On {R}ank-{R}evealing {F}actorisations.
\newblock {\em SIAM Journal on Matrix Analysis and Applications},
  15(2):592--622, 1994.

\bibitem{cotsqrlu}
J.~Demmel, L.~Grigori, M.~F. Hoemmen, and J.~Langou.
\newblock Communication-optimal parallel and sequential {QR} and {LU}
  factorizations.
\newblock Technical Report UCB/EECS-2008-89, Aug 2008.
\newblock Current version available in the ArXiv at
  http://arxiv.org/pdf/0809.0101 Replaces EECS-2008-89 and EECS-2008-74.

\bibitem{ming_rqrcp}
J.~A. Duersch and M.~Gu.
\newblock Randomized {QR} with {C}olumn {P}ivoting.
\newblock {\em SIAM Journal on Scientific Computing}, 39(4):C263--C291, 2017.

\bibitem{garrison2024randomizedpreconditionedcholeskyqralgorithm}
J.~E. Garrison and I.~C.~F. Ipsen.
\newblock A randomized preconditioned {C}holesky-{QR} algorithm, 2024.

\bibitem{Golub1965_2}
G.~Golub.
\newblock Numerical methods for solving linear least squares problems.
\newblock {\em Numerische Mathematik}, 7(3):206--216, June 1965.

\bibitem{GOLUB1965}
G.~H. Golub and P.~Businger.
\newblock Handbook {S}eries {L}inear {A}lgebra. {L}inear {L}east {S}quares
  {S}olutions by {H}ouseholder {T}ransformations.
\newblock {\em Numerische Mathematik}, 7:269--276, 1965.

\bibitem{golub2013matrix}
G.~H. Golub and C.~F. Van~Loan.
\newblock {\em Matrix Computations}.
\newblock Johns Hopkins Studies in the Mathematical Sciences. Johns Hopkins
  University Press, 2013.

\bibitem{maximal_volume_Goreinov}
S.~Goreinov and E.~Tyrtyshnikov.
\newblock The maximal-volume concept in approximation by low-rank matrices.
\newblock {\em Contemporary Mathematics}, 208, 01 2001.

\bibitem{randhsqr}
L.~Grigori and E.~Timsit.
\newblock Randomized {H}ouseholder {QR}, 2024.

\bibitem{srrqr}
M.~Gu and S.~C. Eisenstat.
\newblock Efficient {A}lgorithms for {C}omputing a {S}trong {R}ank-{R}evealing
  {QR} {F}actorization.
\newblock {\em SIAM Journal on Scientific Computing}, 17(4):848--869, 1996.

\bibitem{qlp_lra}
D.~A. Huckaby and T.~Chan.
\newblock Stewart's pivoted {QLP} decomposition for low‐rank matrices.
\newblock {\em Numerical Linear Algebra with Applications}, 12:153 -- 159, 03
  2005.

\bibitem{Kahan_book}
W.~Kahan.
\newblock Numerical linear algebra.
\newblock {\em Canadian Mathematical Bulletin}, 9(5):757–801, 1966.

\bibitem{hqrrp}
P.~G. Martinsson, G.~Quintana~Ort\'{I}, N.~Heavner, and R.~van~de Geijn.
\newblock Householder {QR} factorization {W}ith {R}andomization for {C}olumn
  {P}ivoting ({HQRRP}).
\newblock {\em SIAM Journal on Scientific Computing}, 39(2):C96--C115, 2017.

\bibitem{Martinsson_Tropp_2020}
P.~G. Martinsson and J.~A. Tropp.
\newblock Randomized numerical linear algebra: Foundations and algorithms.
\newblock {\em Acta Numerica}, 29:403–572, 2020.

\bibitem{cqrrpt}
M.~Melnichenko, O.~Balabanov, R.~Murray, J.~Demmel, M.~W. Mahoney, and
  P.~Luszczek.
\newblock Cholesky{QR} with {R}andomization and {P}ivoting for {T}all
  {M}atrices ({CQRRPT}), 2024.

\bibitem{MAALA}
C.~D. Meyer.
\newblock {\em Matrix Analysis and Applied Linear Algebra, Second Edition}.
\newblock Society for Industrial and Applied Mathematics, Philadelphia, PA,
  2023.

\bibitem{murray2023randomizednumericallinearalgebra}
R.~Murray, J.~Demmel, M.~W. Mahoney, N.~B. Erichson, M.~Melnichenko, O.A.
  Malik, L.~Grigori, P.~Luszczek, M.~Dereziński, M.~E. Lopes, T.~Liang,
  H.~Luo, and J.~Dongarra.
\newblock Randomized numerical linear algebra : A perspective on the field with
  an eye to software, 2023.

\bibitem{rokhlin_fast_2008}
V.~Rokhlin and M.~Tygert.
\newblock A fast randomized algorithm for overdetermined linear least-squares
  regression.
\newblock {\em Proceedings of the National Academy of Sciences},
  105(36):13212--13217, September 2008.

\bibitem{qlp}
G.~W. Stewart.
\newblock The {QLP} {A}pproximation to the {S}ingular {V}alue {D}ecomposition.
\newblock {\em SIAM Journal on Scientific Computing}, 20(4):1336--1348, 1999.

\bibitem{strang2006linear}
G.~Strang.
\newblock {\em Linear Algebra and Its Applications}.
\newblock Brooks Cole, 4th edition, 2006.

\bibitem{sketch_lsq}
S.~Tam{\'a}s.
\newblock Improved approximation algorithms for large matrices via random
  projections.
\newblock {\em 2006 47th Annual IEEE Symposium on Foundations of Computer
  Science (FOCS'06)}, pages 143--152, 2006.

\bibitem{big_data_low_rank}
M.~Udell and A.~Townsend.
\newblock Why are big data matrices approximately low rank?
\newblock {\em SIAM Journal on Mathematics of Data Science}, 1(1):144--160,
  2019.

\bibitem{Voronin2017_book_curid}
S.~Voronin and P.-G. Martinsson.
\newblock Efficient algorithms for {CUR} and interpolative matrix
  decompositions.
\newblock {\em Advances in Computational Mathematics}, 43(3):495--516, 2017.

\bibitem{skt_david}
D.~P. Woodruff.
\newblock Sketching as a {T}ool for {N}umerical {L}inear {A}lgebra.
\newblock {\em Foundations and Trends® in Theoretical Computer Science},
  10(1–2):1--157, 2014.

\bibitem{rqrcp}
J.~Xiao, M.~Gu, and J.~Langou.
\newblock Fast {P}arallel {R}andomized {QR} with {C}olumn {P}ivoting
  {A}lgorithms for {R}eliable {L}ow-{R}ank {M}atrix {A}pproximations.
\newblock In {\em 2017 IEEE 24th International Conference on High Performance
  Computing (HiPC)}, pages 233--242, Los Alamitos, CA, USA, dec 2017. IEEE
  Computer Society.

\end{thebibliography}
\end{document}